\crefname{Diagram}{Diagram}{Diagrams}
\crefname{subsec}{Subsection}{Subsections}
\crefname{defin}{Definition}{Defintions}
\crefname{rem}{Remark}{Remarks}
\newtheorem{theo}{Theorem}[section]
\newtheorem{lem}[theo]{Lemma}
\newtheorem{propo}[theo]{Proposition}
\newtheorem{coropro}[theo]{Corollary}
\theoremstyle{definition}
\newtheorem{defin}[theo]{Definition}
\theoremstyle{definition}
\newtheorem{rem}[theo]{Remark}
\theoremstyle{definition}
\newcommand\blfootnote[1]{%
  \begingroup
  \renewcommand\thefootnote{}\footnote{#1}%
  \addtocounter{footnote}{-1}%
  \endgroup
}
\title{Invariant Grassmannians and a K3 surface with an action of order 192*2}
\author{Stevell Muller}
\date{}
\begin{document}
\maketitle

\begin{abstract}
Given a complex vector space $V$ of finite dimension, its Grassmannian variety parametrizes all subspaces of $V$ of a given dimension. Similarly, if a finite group $G$ acts on $V$, its invariant Grassmannian parametrizes all the $G-$invariant subspaces of $V$ of a given dimension. Based on this fact, we develop an algorithm for finding equations of $G-$invariant projective varieties arising as an intersection of hypersurfaces of the same degree.

We apply the algorithm to find equations describing a polarized K3 surface with a faithful action of $T_{192}\rtimes \mu_2$ and some further symmetric K3 surfaces with a degree 8 polarization.
\end{abstract}

\tableofcontents 

\section*{Introduction}\blfootnote{\hspace*{-5.4ex} Gefördert durch die Deutsche Forschungsgemeinschaft (DFG) – Projektnummer 286237555 – TRR 195.\\
 Funded by the Deutsche Forschungsgemeinschaft (DFG, German Research Foundation) – Project-ID 286237555 – TRR 195.}
\blfootnote{Author: Muller Stevell, Universität des Saarlandes, muller@math.uni-sb.de}

A \emph{K3 surface} is a simply-connected, compact, complex manifold $S$ admitting a nowhere vanishing holomorphic symplectic $2-$form $\sigma_S\in H^{2,0}(S)$, which is unique up to scaling. Given a projective K3 surface $S$ and a finite group $G\leq\text{Aut}(S)$, the action of $G$ on $H^{2, 0}(S)$ induces a short exact sequence
\[ 1\to G_s\to G\to \mu_n\to 1\]
where $G_s\trianglelefteq G$ is the normal subgroup of so-called \emph{symplectic} automorphisms. The classification of such finite groups of automorphisms of K3 surfaces is today well-understood. In \cite{nik80}, Nikulin classifies finite symplectic actions on K3 surfaces which are abelian. This was later extended by Mukai \cite{muk88} who shows that for a K3 surface $S$, a finite subgroup of symplectic automorphisms $G_s\leq \textnormal{Aut}(S)$ embeds into one among 11 maximal groups. A simpler proof of Mukai's result is given by Kond\=o \cite{kon98}, using a relation with automorphisms of the Niemeier lattices. A list of such groups of symplectic automorphisms was recomputed by Xiao \cite{xia96}, together with combinatorial data about the fixed points of the corresponding action on the surface. Further transcendental data associated to the action of these groups on the intersection form on the second integral cohomology were determined by Hashimoto in \cite{has12}. On the other side of the spectrum, purely non-symplectic automorphisms (non-symplectic automorphisms without any non-trivial symplectic iterates) have been studied and classified for all orders. The literature on the subject is quite rich: we refer to \cite{st} for an account on this problem.\bigskip

In order to extend the classification of finite symplectic actions to a classification of finite \emph{mixed actions}, Brandhorst and Hashimoto describe in \cite{bh20} a lattice-theoretic approach which they apply to the 11 maximal groups classified by Mukai \cite{muk88}. They classify 42 isomorphism classes of pairs $(S, G)$ where $S$ is a projective K3 surface and $G\leq\text{Aut}(S)$ is such that the normal subgroup $G_s\trianglelefteq G$ of symplectic automorphisms is a proper subgroup, and it is isomorphic to one of the 11 aforementioned maximal groups. Such pairs $(S, G)$ come equipped with a canonical $G$-invariant \emph{polarization} $L$, which is a primitive ample line bundle on $S$. For each case such a surface $S$ has maximal Picard rank. The authors moreover exhibit 25 cases for which an explicit projective model of $S$, by means of equations, is known. In particular, all of the triples $(S,G, G_s)$ of degree $c_1(L)^2\leq 10$ have been treated, except one where the associated symplectic action has order 192 (case 77b). Following the notation in \cite{bh20} (except the polarization that we denote "$L$" here and not "$l$", and the transcendental lattice which we write "$T_S$"), we display in \cref{tab:my_labelbis} some information about this isomorphism class of K3 surfaces (where $T_{192}$ is one of the maximal subgroups of symplectic automorphisms classified in \cite{muk88}).\bigskip
\renewcommand{\arraystretch}{1.25}
\begin{table}[t!]
    \centering
    \begin{tabular}{|c|c|c|c|c|c|c||c|}
        \hline 
         case&$G_s$&$\Lambda_{K3}^{G_s}$&$\textrm{SO}(\Lambda_{K3}^{G_s})$&$ G/G_s$&$c_1(L)^2$&$T_S$&$G$  \\
         \hline
         77b&$T_{192}$&$\begin{pmatrix}4&0&0\\0&8&4\\0&4&8\end{pmatrix}$&$D_6$&$\mu_2$&8&$\begin{pmatrix}4&0\\0&24\end{pmatrix}$&$\begin{array}{c}T_{192}\rtimes \mu_2\\\rm{GAP\,Id\;} [384,5602]\end{array}$\\
         \hline
    \end{tabular}
    \caption{Specification for the triple $(S,G, G_s)$ considered in this paper}
    \label{tab:my_labelbis}
\end{table}

Knowing the transcendental lattice $T_S$ of such a projective K3 surface $S$ already gives one some known properties about $S$. It is a Kummer surface and it is the unique K3 surface of degree 8 with a faithful symplectic action of $T_{192}$. Moreover, the surface $S$ is the \emph{Barth-Bauer octic} with the second largest number (160) of conics (see \cite[Theorem 4.2, Example 4.3]{deg22}). However no equations describing $S$ are known. We aim to prove the following:

\begin{theo}\label{theo:mainth}
The polarized K3 surface $(S, L)$ corresponding to the case 77b in \cite{bh20} admits a projective model in $\mathbb{P}^5_{\mathbb{C}}$ given by
\[ S\colon\left\{\begin{array}{rlrlrlrlrlr} i x_0x_1&+& x_0x_2&+& x_1x_3&+&i x_2x_3&+& x_5^2& = 0\\
i x_0x_1&-& x_0x_2&-& x_1x_3&+&i x_2x_3&+& x_4^2& = 0\\
- x_0x_3&&&-&x_1x_2&&&-& x_4x_5& = 0
\end{array}\right..\]
It admits a maximal symplectic action of $T_{192}$ and is invariant under the linear action of $G := T_{192}\rtimes \mu_2$ on $\mathbb{P}^5_{\mathbb{C}}$ given by
\begin{footnotesize}
\begin{align*}
&\begin{array}{ccc}
\sigma_1 = \begin{pmatrix}0&-1&0&0&0&0\\-1&0&0&0&0&0\\0&0&0&-1&0&0\\0&0&-1&0&0&0\\0&0&0&0&1&0\\0&0&0&0&0&1\end{pmatrix}& \sigma_2 = \frac{1}{2}\begin{pmatrix}0&0&0&-2&0&0\\0&2&0&0&0&0\\2&0&0&0&0&0\\0&0&-2&0&0&0\\0&0&0&0&z^6-1&-z^6-1\\0&0&0&0&-z^6+1&-z^6-1\end{pmatrix} &\sigma_3= \begin{pmatrix}0&0&1&0&0&0\\0&0&0&-1&0&0\\-1&0&0&0&0&0\\0&1&0&0&0&0\\0&0&0&0&z^6&0\\0&0&0&0&0&-z^6\end{pmatrix}
\end{array}\\
&\begin{array}{cc}
\sigma_4 = \frac{1}{2}\begin{pmatrix}1&-1&-1&-1&0&0\\-1&-1&-1&1&0&0\\-1&-1&1&-1&0&0\\-1&1&-1&-1&0&0\\0&0&0&0&-z^5+z^3+z&-z^5-z^3+z\\0&0&0&0&z^5+z^3-z&z^5-z^3-z\end{pmatrix}&
\sigma_5 = \frac{1}{2}\begin{pmatrix}-1&-1&1&-1&0&0\\-1&-1&-1&1&0&0\\1&-1&-1&-1&0&0\\-1&1&-1&-1&0&0\\0&0&0&0&-2&0\\0&0&0&0&0&-2\end{pmatrix}
\end{array}
\end{align*}
\end{footnotesize}where $T_{192}$ is one of the maximal subgroups of symplectic automorphisms classified in \cite{muk88} and $z$ is a primitive $24-$th root of unity.
\end{theo}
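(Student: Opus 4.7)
The plan is to apply the algorithm developed in \cref{sec:2} to the specific group $G = T_{192} \rtimes \mu_2$ acting linearly on $V = \mathbb{C}^6$ via the generators $\sigma_1, \ldots, \sigma_5$ displayed in the statement. First I would verify that these five matrices generate a faithful linear representation $\rho\colon G \to \mathrm{GL}(V)$ whose projectivization, restricted to $T_{192}$, is a lift of a symplectic action of $T_{192}$, using Mukai's classification in \cite{muk88} to identify the correct central extension on which the lift lives.

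Next, I would compute the decomposition of the $G$-module $W := \mathrm{Sym}^2(V^*)$ of quadratic forms on $\mathbb{P}^5_{\mathbb{C}}$ into isotypical components. Since a general polarized K3 surface of genus 5 is the smooth complete intersection of three quadrics in $\mathbb{P}^5_{\mathbb{C}}$, we are looking for three-dimensional $G$-invariant subspaces $U \subseteq W$. Applying the invariant Grassmannian parametrization developed in \cref{sec:para}, these subspaces can be enumerated character by character, reducing the search to finitely many invariant Grassmannians of smaller dimension, each of which can be sampled computationally.

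For each candidate $U$, I would compute the complete intersection $S_U \subseteq \mathbb{P}^5_{\mathbb{C}}$ and perform the following verifications: that $S_U$ is smooth, for instance by checking that the singular locus defined by the ideal of $3\times 3$ minors of the Jacobian is empty; that $S_U$ has the Hilbert polynomial of a genus-5 K3 surface; and that $T_{192}$ acts symplectically on $S_U$, which amounts to checking that its generators fix a chosen generator of $H^{2,0}(S_U)$ obtained via adjunction from $U$. The explicit equations displayed in the theorem then arise as the output of this procedure for an invariant subspace of appropriate character.

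The hardest step will be matching the resulting surface with case 77b of \cite{bh20}. This requires computing lattice-theoretic invariants of $(S, L)$, such as the fixed lattice of $T_{192}$ inside $H^2(S, \mathbb{Z})$ and the transcendental lattice, and comparing them against Brandhorst--Hashimoto's list. A secondary subtlety is choosing the correct linear lift of the projective action of $G$: different central extensions by $\mu_k$ may act on $V$, and only certain lifts carry a three-dimensional invariant subspace of quadrics cutting out a smooth K3 surface with the prescribed polarization.
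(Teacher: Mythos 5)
Your construction phase is essentially the paper's: decompose $\mathrm{Sym}^2$ of the dual of the $6$-dimensional module and hunt for $3$-dimensional submodules via the invariant Grassmannian of \cref{sec:para}. One caveat there: the linear action does not live on $G$ itself but on a Schur cover $E\twoheadrightarrow G$ of order $3072$ (the Schur multiplier is $C_2^3$), and the paper classifies the $10$ classes of $p$-projectively faithful representations of $E$ on $F^6$ rather than starting from the displayed matrices; you acknowledge the central-extension subtlety at the end, but your framing of $\rho\colon G\to\mathrm{GL}(V)$ glosses over it. Also note that for the relevant representation the answer is not a finite list of candidates: $C_\chi(3)=\{\mu\}$ with $\langle\chi,\mu\rangle=2$, so the invariant Grassmannian is a $\mathbb{P}^1$, and the paper must argue that the resulting pencil $\{S_\lambda\}$ is a $G$-isotrivial family (all nonzero $\lambda$ give $G$-equivariantly isomorphic surfaces, and the two boundary members are singular). ``Sampling'' the family gives you existence of one smooth invariant intersection but not the uniqueness that the paper's identification argument relies on.

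The genuine gap is in your final step. Computing the $T_{192}$-fixed lattice and the transcendental lattice of an explicitly given complete intersection is not a routine computation --- it amounts to determining the Picard lattice of a specific K3 surface, and the paper never attempts it. Instead the argument runs in the opposite direction: starting from the abstract lattice data of case 77b recorded in \cite{bh20} and \cite{st}, one checks with Saint-Donat's criterion (\cref{theo:hyper}) and Shimada's algorithm that the degree-$8$ polarization is \emph{not} hyperelliptic, so the surface of case 77b \emph{must} admit a model as a smooth $G$-invariant $(2,2,2)$ complete intersection in $\mathbb{P}^5_{\mathbb{C}}$; since the invariant-Grassmannian computation shows that, for the given representation class, such a model is unique up to $G$-equivariant isomorphism, the computed surface is that of case 77b. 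Without the non-hyperellipticity check you do not even know that 77b has a model of the required shape, and without the uniqueness you cannot match your output to it. Finally, the symplectic subgroup is identified not by exhibiting a holomorphic $2$-form via adjunction but by the purely character-theoretic determinant criterion of \cref{lem:muk}, comparing $\det(\chi_\rho)$ with the determinant character of the $3$-dimensional submodule; this is what makes the verification $G_s\simeq T_{192}$ computationally feasible.
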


\begin{rem}
From these equations and the description of the group action, one could possibly compute explicitly representatives for the $G$-orbits of 160 conics on $S$, similarly to \cite{bs21,nas22} for the 3 orbits of 800 conics on the $M_{20}$-quartic. Nonetheless, performing such computations appears to be expensive. We do not address this problem in the present paper.
\end{rem}

The main goal of this paper is to develop an algorithm for computing equations describing projective K3 surfaces given as \emph{(complete) intersections} of $t$ hypersurfaces of the same degree $d$ in the projective space $\mathbb{P}^n_{\mathbb{C}}$, with prescribed group of symmetry $G$. The complete intersection case applies to \emph{general} polarized K3 surfaces of degree 2, 4 and 8.


\subsection*{Plan of the paper}
\hspace{\parindent}In \cref{sec:prelem} we review some results about linear representations of finite groups as well as their projective representations. 

The core of this paper is found in \cref{sec:para} where we discuss an algorithmic way of classifying subrepresentations of a given representation of a finite group over an algebraically closed field of characteristic zero. We use in particular that one can parametrize, as for Grassmannian varieties in the case of vector spaces, subrepresentations of a given dimension and given character of a higher dimensional representation of a finite group. 

Then, we show in \cref{sec:2} how one can use this systematic study of finite group representations to parametrize defining ideals of intersections of hypersurfaces of the same degree preserved under an action of a finite group on their ambient projective space. 

Finally, in \cref{sec:sec4} we comment on some geometric features arising from the K3 surface described in \cref{theo:mainth}.

All the algorithms written for the purpose of this paper have been implemented in the computer algebra system Oscar \cite{Osc}, written in Julia \cite{Julia-2017}.

\section*{Acknowledgements}
\hspace{\parindent}The author would like to thank Simon Brandhorst for suggesting him the subject, for the useful discussions and all the help provided. The author would also like to thank Max Horn and Matthias Zach for helping to optimize the implementation of the algorithms used for the purpose of this paper, and Tommy Hofmann for the precious comments made on a draft of this paper. The author is grateful to Benedetta Piroddi and Enrico Fatighenti for the discussions regarding the content of \cref{sec:sec4}, and to anonymous referees for their corrections and suggestions which improved the quality of the paper.

\section{Preliminaries on representation theory}\label{sec:prelem}
\hspace{\parindent}  In this section we recall some facts about representation theory and character theory for the readers who are not familiar with these notions. In particular, we fix some definitions and notations for the rest of the paper, and we state the relevant results we use throughout. In this paper we work over algebraically closed fields of characteristic zero, mainly $\mathbb{C}$, all groups are supposed to be finite and all vector spaces are of finite dimension.

\subsection{Linear representations and group algebra modules}
\hspace{\parindent}  The definitions and results of this subsection can be found in any classical book about representation theory of finite groups. We refer, for instance, to the book \cite{rpth}.\\

Let $K$ be an algebraically closed field of characteristic zero and let $E$ be a finite group. By Maschke's theorem \cite[Theorem 3.1]{rpth}, the group algebra $KE$ is \emph{semisimple}, that is, all of its modules are semisimple and therefore can be decomposed as the direct sum of simple submodules. Throughout this paper, we describe $KE-$modules as pairs $(V, \rho)$, where $V$ is a finite-dimensional $K-$vector space and $\rho$ is a \emph{$K-$linear representation of $E$ on $V$}, that is $\rho$ is a homomorphism
\begin{equation*}
    \rho\colon E\to\text{GL}(V).
\end{equation*}

Given two $KE-$modules $M = (V, \rho)$ and $M' = (V', \rho')$, we say that $M$ and $M'$ are \emph{equivalent}, and we write $M \cong M'$, if there exists an invertible $K-$linear map $\mathcal{L}\colon V\to V'$ such that for all $e\in E$,
\[ \rho^{\mathcal{L}}(e) := \mathcal{L}\circ\rho(e)\circ\mathcal{L}^{-1}.\]
If $V = V'$, we also say that $\rho$ and $\rho'$ are themselves \emph{equivalent}.

\begin{rem}\label{rem:somrem}
By the Krull-Schmidt theorem \cite[Theorem 2.19]{rpth}, if a $KE-$module is semisimple then its decomposition into a direct sum of simple submodules is unique up to equivalence and order of the summands. Moreover, according to \cite[Corollary (2.5)]{isa76}, the equivalence classes of simple $KE-$modules correspond bijectively to the conjugacy classes of $E$. 
\end{rem}

If $M = (V, \rho)$ is a $KE-$module, then by Maschke's theorem, one can write 
\begin{equation*}
    M = \bigoplus_{i=1}^l W_i^{f_i} 
\end{equation*}
where the $W_i$'s are pairwise non equivalent simple $KE-$modules.
We call this decomposition an \emph{isotypical decomposition} of $M$. For all $1\leq i\leq l$, we call the summand $W_i^{f_i}$ an \emph{isotypical component} of $M$. It is itself a $KE-$module which we say to be \emph{isotypical of weight} $\textnormal{dim}_K(W_i)$ (to be understood, the $K-$dimension of the underlying vector space). Although the decomposition of $M$ into the sum of its isotypical components is unique, the decomposition of each $W_i^{f_i}$ into a sum of simple modules is unique only up to equivalence.\\

We conclude this subsection by stating one key result we use several times in this paper.
\begin{theo}[Schur's lemma; \cite{rpth}, Proposition 1.16, Corollary 1.17]\label{lem:schur}
Let $M= W^{\oplus t}$ and $M'= W'^{\oplus t'}$ be two isotypical $KE-$modules, where $W$ and $W'$ are simple. Then, under the assumption that $K$ is algebraically closed, one has
\begin{equation*}
    \textnormal{Hom}_{KE}(M, M') \simeq \left\{\begin{array}{lll}\textnormal{Mat}_{t,t'}(K)&\text{if}&W\cong W'\\
    0&\text{else}&
    \end{array}\right.
\end{equation*}
where $\textnormal{Mat}_{t,t'}(K)$ denotes the set of $t-$by$-t'$ matrices with entries in $K$.
In particular, the $KE-$ automorphism group of a simple $KE-$module can be identified with $K^{\times}$.
\end{theo}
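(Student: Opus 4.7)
The plan is to derive the isotypical version from the classical Schur's lemma for simple modules, then use algebraic closedness to identify $\textnormal{End}_{KE}(W)$ with $K$, and finally apply the universal property of finite direct sums to handle the multiplicities $t$ and $t'$.

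First I would prove the baby Schur statement: for any $KE$-homomorphism $\phi\colon W\to W'$ between simple $KE$-modules, $\ker(\phi)$ is a $KE$-submodule of $W$ and $\textnormal{im}(\phi)$ is a $KE$-submodule of $W'$. Simplicity then forces each to be either trivial or the whole module, so $\phi$ is either zero or an isomorphism of $KE$-modules. In particular, if $W\not\cong W'$ then $\textnormal{Hom}_{KE}(W,W')=0$, which will give the second case of the theorem after the reduction to components performed in the last step.

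Next, assuming $W\cong W'$ and identifying them via a fixed isomorphism, I would invoke the algebraic closedness of $K$: any $\phi\in\textnormal{End}_{KE}(W)$ is a $K$-linear endomorphism of a finite-dimensional space, hence admits an eigenvalue $\lambda\in K$. Then $\phi-\lambda\cdot\textnormal{id}_W$ is a $KE$-endomorphism with nontrivial kernel, so by the previous paragraph it is the zero map. Thus $\phi=\lambda\cdot\textnormal{id}_W$, producing the canonical identification $\textnormal{End}_{KE}(W)\cong K$.

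To conclude, I would use the universal property of finite direct sums: a $KE$-homomorphism $\Phi\colon W^{\oplus t}\to W'^{\oplus t'}$ is uniquely determined by its ``matrix'' of components $\Phi_{ij}\colon W\to W'$ for $1\le i\le t$ and $1\le j\le t'$, obtained by pre-composing with the $i$-th inclusion and post-composing with the $j$-th projection. If $W\not\cong W'$, each $\Phi_{ij}$ vanishes and so does $\Phi$, giving $\textnormal{Hom}_{KE}(M,M')=0$. If $W\cong W'$, each $\Phi_{ij}$ corresponds to a unique scalar in $K$ by the previous step, and the assignment $\Phi\mapsto(\Phi_{ij})_{i,j}$ is a $K$-linear isomorphism onto $\textnormal{Mat}_{t,t'}(K)$. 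The ``in particular'' statement on automorphisms of a simple $KE$-module is then immediate: $\textnormal{Aut}_{KE}(W)$ is the group of invertible elements of $\textnormal{End}_{KE}(W)\cong K$, namely $K^{\times}$. There is no real obstacle in the argument; the only non-formal input is the existence of an eigenvalue in $K$, which is precisely where the algebraic-closedness hypothesis is essential, and the only care to take is to match the dimensional convention of $\textnormal{Mat}_{t,t'}(K)$ with the direction of $\Phi$.
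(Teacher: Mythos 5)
Your proof is correct and complete: the reduction to the simple case, the eigenvalue argument using algebraic closedness to get $\textnormal{End}_{KE}(W)\cong K$, and the component-matrix decomposition of $\textnormal{Hom}_{KE}(W^{\oplus t},W'^{\oplus t'})$ are all sound, and the finite-dimensionality needed for the eigenvalue step is guaranteed by the paper's standing conventions. The paper itself gives no proof of this statement --- it is quoted directly from the cited reference --- and your argument is precisely the standard one found there, so there is nothing to compare beyond noting that you have supplied the omitted details correctly.
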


\subsection{Characters of representations}
\hspace{\parindent}  To any $K-$linear representation of $E$ on a vector space $V$, one can associate a so-called \emph{character}. These characters encode most of the information one needs to study $KE-$modules. From a computational point of view, characters of finite groups are an efficient tool to work with and allow us to simplify many algorithmic methods in representation theory. To read more about characters we refer to \cite{isa76}. \\

Again, let $K$ be an algebraically closed field of characteristic 0, let $E$ be a finite group, and let $M = (V,\rho)$ be a $KE-$module. We define the \emph{$K-$character} $\chi_{M}$ of $M$ to be the mapping
\begin{equation*}
    \chi_{M}\colon E\to K,\; e\mapsto \text{Tr}(\rho(e)).
\end{equation*}
We say also that $M$ \emph{affords} $\chi_{M}$ and that $\chi_{M}$ is \emph{afforded by} $M$. One notes that $\chi_{M}(1_E) = \text{dim}_K(V)$ and $\chi_{M}$ is constant on each conjugacy class of $E$. More generally, $K-$characters of $E$ are a special case of what we call \emph{class functions} on $E$. Moreover, for any $K-$character $\chi$ of $E$, there is a $KE-$module $M$ such that $\chi = \chi_{M}$. We define sum and product of $K-$characters of $E$  as pointwise sum and product of their respective images in $K$. So for instance, if $\chi$ and $\chi'$ are two $K-$characters of $E$ afforded by $M$ and $M'$ respectively, then $\chi+\chi'$ is afforded by $M \oplus M'$ and vice-versa. A $K-$character $\chi$ of $E$ is said to be \emph{simple}, or \emph{irreducible}, if $\chi$ cannot be written non-trivially as sum of other $K-$characters of $E$. 

\begin{propo}[\cite{isa76}, Corollary (2.5)]
The number of simple $K-$characters of $E$ is equal to the number of conjugacy classes of $E$ (recall that $E$ is a finite group here). In particular, simple $K-$characters of $E$ are afforded by simple $KE-$modules.
\end{propo}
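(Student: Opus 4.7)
The plan is to exhibit a bijection between simple $K$-characters of $E$ and equivalence classes of simple $KE$-modules, since by \cref{rem:somrem} the latter is already in bijection with the conjugacy classes of $E$. The assignment I would use is the obvious one: $W \mapsto \chi_W$. There are two things to check: that every simple character is of this form, and that the assignment is injective on equivalence classes and well-defined on isomorphism classes (the latter being immediate since equivalent representations have the same trace).

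First, I would show that every simple character is afforded by a simple $KE$-module. Let $\chi$ be simple and pick any $KE$-module $M$ affording $\chi$. By Maschke's theorem (used earlier in the excerpt), $M$ decomposes as $M = \bigoplus_{i=1}^{l} W_i^{f_i}$ with pairwise non-equivalent simple summands, so that $\chi = \sum_{i=1}^{l} f_i \chi_{W_i}$ as a pointwise sum. Each $\chi_{W_i}$ is itself a character of $E$, so the simplicity of $\chi$ forces exactly one of the $f_i$ to be $1$ and the rest to be $0$, whence $\chi = \chi_{W_1}$ with $W_1$ simple. Conversely, to see that $\chi_W$ is simple for every simple $W$, I would need that distinct equivalence classes of simple modules afford $K$-linearly independent characters in the space of class functions on $E$: granted this, any decomposition $\chi_W = \chi' + \chi''$ into nonzero characters would, after rewriting the modules $M', M''$ affording them via Maschke, yield a non-trivial $\mathbb{Z}_{\geq 0}$-linear relation among characters of pairwise inequivalent simples including $W$, contradicting independence.

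The main obstacle is the linear independence statement above, which is the genuine content of the proposition and the step that cannot be reduced to Maschke alone. The standard route is to combine Schur's lemma (\cref{lem:schur}) with Schur orthogonality: one defines a symmetric $K$-bilinear pairing on class functions by $\langle \phi, \psi\rangle := \tfrac{1}{|E|} \sum_{e\in E} \phi(e)\psi(e^{-1})$ and checks, using the averaging idempotent $\tfrac{1}{|E|}\sum_e \rho(e) \in \mathrm{End}_{KE}(W\oplus W')$ together with \cref{lem:schur}, that $\langle \chi_W, \chi_{W'}\rangle$ equals $1$ if $W\cong W'$ and $0$ otherwise. This orthonormality yields the required linear independence, and combining it with the previous paragraph gives a bijection between simple characters of $E$ and equivalence classes of simple $KE$-modules. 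The proposition then follows from \cref{rem:somrem}, which identifies the latter set with the set of conjugacy classes of $E$.
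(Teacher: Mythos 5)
The paper itself gives no proof of this proposition: it is quoted verbatim from \cite[Corollary (2.5)]{isa76}, so there is no internal argument to compare yours against. Judged on its own terms, the second half of your argument is correct and standard: Maschke's theorem shows every simple character is afforded by some simple module, and the Schur orthogonality relations (derived from the averaging idempotent together with \cref{lem:schur}, which is legitimate here since $K$ has characteristic zero and $|E|$ is invertible) give both the linear independence you need for the converse and the injectivity of $W\mapsto \chi_W$ on equivalence classes. This correctly establishes the bijection between simple $K$-characters and equivalence classes of simple $KE$-modules, i.e.\ the ``in particular'' clause.

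The gap is in the headline count. You reduce ``number of simple characters equals number of conjugacy classes'' to \cref{rem:somrem}, but that remark is itself only a citation of the very same \cite[Corollary (2.5)]{isa76}; you are in effect proving the statement by invoking it. The genuine content of the counting claim --- that the equivalence classes of simple $KE$-modules are in bijection with the conjugacy classes of $E$ --- is exactly the part your argument does not touch. To close it you would need one of the standard inputs: either that the irreducible characters span the space of class functions on $E$ (which, combined with your orthonormality, pins the count down to the dimension of that space, namely the number of conjugacy classes), or the Wedderburn/center argument identifying $\dim_K Z(KE)$ with the number of simple modules on one side and the number of conjugacy classes on the other. Without one of these, your proof establishes only the character--module bijection, not the count.
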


\begin{propo}[\cite{isa76}, Corollary (2.9)]\label{propo:samechar}
  Two $KE-$modules $M$ and $M'$ are equivalent if and only if they afford the same $K-$character of $E$.
\end{propo}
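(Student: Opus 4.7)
The plan is to prove the two implications separately, with the forward direction being immediate from the conjugation-invariance of the trace, and the reverse direction following from a combination of Maschke's theorem, Krull--Schmidt and the linear independence of simple $K$-characters.

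For the forward implication I would pick a $KE$-equivalence $\mathcal{L}\colon V \to V'$ between $M = (V, \rho)$ and $M' = (V', \rho')$. For every $e \in E$ one then has $\rho'(e) = \mathcal{L} \circ \rho(e) \circ \mathcal{L}^{-1}$, so $\rho'(e)$ and $\rho(e)$ are similar as endomorphisms of (possibly different) finite dimensional $K$-vector spaces, and hence have equal trace. Taking traces gives $\chi_{M'}(e) = \chi_M(e)$ for all $e \in E$.

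For the reverse implication I would fix representatives $W_1,\ldots, W_k$ of the equivalence classes of simple $KE$-modules, where $k$ is the number of conjugacy classes of $E$ (see \cref{rem:somrem}). Maschke's theorem then provides isotypical decompositions
\[ M \cong \bigoplus_{i=1}^k W_i^{m_i} \qquad \text{and} \qquad M' \cong \bigoplus_{i=1}^k W_i^{m'_i}\]
for some non-negative integers $m_i$ and $m'_i$. Additivity of characters with respect to direct sums yields $\chi_M = \sum_{i=1}^k m_i\chi_{W_i}$ and $\chi_{M'} = \sum_{i=1}^k m'_i \chi_{W_i}$, so the assumption $\chi_M = \chi_{M'}$ produces the relation $\sum_{i=1}^k (m_i - m'_i)\chi_{W_i} = 0$ in the space of class functions on $E$.

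The main obstacle, which I will have to import from classical character theory, is the linear independence of the simple $K$-characters $\chi_{W_1},\ldots, \chi_{W_k}$ as class functions on $E$. Once this is granted, the previous relation forces $m_i = m'_i$ for every $i$, and Krull--Schmidt (\cref{rem:somrem}) lets me conclude that $M \cong M'$. The standard way to establish the linear independence is via Schur's orthogonality relations, which themselves follow from \cref{lem:schur} applied to the $KE$-homomorphism spaces between simple modules; in fact the $\chi_{W_i}$ turn out to form an orthogonal basis of the $k$-dimensional space of class functions, which is substantially more than what is strictly needed for the proposition.
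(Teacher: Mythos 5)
Your proof is correct, and it follows the same route as the result the paper imports without reproving (Isaacs, Corollary (2.9)): trace invariance under conjugation for the forward direction, and isotypical decomposition plus the linear independence of the simple $K$-characters for the converse. The one ingredient you import, the linear independence of irreducible characters, is indeed the crux and is legitimately available here since $K$ is algebraically closed of characteristic zero.
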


We define the \emph{degree} of a $K-$character $\chi$ of $E$ as $\chi(1_E)$. For all $n\geq 1$, we denote $\textnormal{Irr}^n_K(E)$ the set of all simple $K-$characters of $E$ of degree $n$, and we define $\textnormal{Irr}_K(E) := \bigsqcup_{n\geq 1}\text{Irr}^n_K(E)$. 
According to \cite[Theorem (2.8)]{isa76}, any $K-$character $\chi$ of $E$ admits a unique decomposition
\begin{equation*}\label{eq:chardec}
    \chi = \sum_{\mu\in\text{Irr}_K(E)}e_{\mu}\mu
\end{equation*}
where $e_{\mu}\in \mathbb{Z}_{\geq 0}$ is called the \emph{multiplicity} of the irreducible character $\mu$ in $\chi$. Given two $K-$characters $\chi = \sum_{\mu\in\text{Irr}_K(E)}e_{\mu}\mu$ and $\chi' = \sum_{\mu\in\text{Irr}_K(E)}e'_{\mu}\mu$ of $E$, we define their \emph{scalar product}
\begin{equation*}
    \langle \chi, \chi' \rangle := \sum_{\mu\in\textnormal{Irr}_K(E)}e_{\mu}e'_{\mu}.
\end{equation*}
In particular, for $\mu\in\textnormal{Irr}_K(E)$ we have that $\langle \mu, \mu \rangle = 1$, and $\langle \chi, \mu \rangle$ is equal to the multiplicity of $\mu$ in $\chi$. If $\chi = \sum_{\mu\in\text{Irr}_K(E)}e_{\mu}\mu$ and $\chi' = \sum_{\mu\in\text{Irr}_K(E)}e'_{\mu}\mu$ are two $K-$characters of $E$ such that $0 \leq e_{\mu} \leq e'_{\mu}$ for all $\mu\in \textnormal{Irr}_K(E)$, then we say that $\chi$ is a \emph{constituent} of $\chi'$.\\

We see that the decomposition of the $K-$character afforded by  a $KE-$module depends only on its isotypical decomposition. We say that a $K-$character $\chi$ of $E$ is \emph{isotypical} if $\chi$ is afforded by an isotypical $KE-$module, i.e. it is a positive multiple of an irreducible $K-$character of $E$.

\subsection{Group actions and projective representations}\label{sec:1} 
\hspace{\parindent}  In this subsection we state some relevant results about projective representations of finite groups. We refer to \cite[Chapter 11]{isa76} for the readers who are not familiar with the notion of projective representations. The key point is that one can relate linear actions of a group $G$ on $\mathbb{P}(\mathbb{C}^{n+1})$ to linear actions of a possibly larger group, called a \emph{Schur cover}, on $\mathbb{C}^{n+1}$.\\

Let $K$ be algebraically closed of characteristic zero. Given a finite group $G$ and a finite-dimensional $K-$vector space $V$, we call a \emph{projective representation} of $G$ on $V$ any homomorphism 
\[ \overline{\rho}\colon G\to \textnormal{PGL}(V).\]
Such a representation is called \emph{faithful} if it is injective. For any group $G$, there exists a finite abelian group $M(G)$ called the \emph{Schur multiplier} of $G$ (see \cite[Definition (11.12)]{isa76}), which can be identified with $H^2(G, K^{\times})$, the second cohomology group of $G$ with coefficients in $K^{\times}$. In \cite{Schur1904}, Schur proves that for any finite group $G$, there exists a group $E$ and an exact sequence
\[\begin{tikzcd}
1\to H\xrightarrow{i} E\xrightarrow{p} G\to 1
\end{tikzcd}\]
such that $H\cong M(G)$, and $i(H) \leq [E, E]\cap Z(E)$ where $[E, E]$ is the derived subgroup of $E$ and $Z(E)$ is its center. This exact sequence satisfies the following: for any projective representation $\overline{\rho}\colon G\to \text{PGL}(V)$ of $G$ on a finite-dimensional $K-$vector space $V$, there exists a linear representation $\rho\colon E\to \text{GL}(V)$ making the following diagram with exact rows commute 
\begin{equation}\label[Diagram]{diag:maindiag}
\begin{tikzcd}
1\arrow[r]&H\arrow[d,"\beta"']\arrow[r,"i"]&E\arrow[d,"\exists\rho"']\arrow[r, "p"]&G\arrow[d, "\overline{\rho}"']\arrow[r]&1\\
1\arrow[r]&K^{\times}\arrow[r,"\cdot\textnormal{Id}_V"']&\textnormal{GL}(V)\arrow[r,"\pi"']&\textnormal{PGL}(V)\arrow[r]&1
\end{tikzcd}.
\end{equation}
Here $\beta$ is induced by the restriction of $\rho$ to $i(H)$. This result is known as Schur's theorem (see \cite[Theorem (11.17)]{isa76}) and we call $p\colon E\twoheadrightarrow G$ (or just $E$) a \emph{Schur cover} of $G$. We moreover refer to $\rho$ as a \emph{$p-$lift} of $\overline{\rho}$ and the latter as the \emph{$p-$reduction} of the former. Schur's theorem allows us to use the results from the theory of linear representations of finite groups to work with projective representations of finite groups. In particular, given a finite group $G$, a Schur cover $p\colon E\twoheadrightarrow G$ and a finite-dimensional $K-$vector space $V$, one can relate a classification of projective representations of $G$ on $V$ to a classification of linear representations of $E$ on $V$.

\begin{rem}
    In general, a Schur cover $E$ of a finite group $G$ is not unique. If $G = [G, G]$ is perfect, then $E$ is unique up to isomorphism. Otherwise, an upper bound on the number of non-isomorphic Schur covers of $G$ can be found in \cite[Theorem 2.5.14]{kar87}.
\end{rem}

\begin{defin}[\cite{isa76}, Definition(1.18); \cite{isa76}, Page 177]\label[defin]{defin:def2}
Let $G$ be a finite group and let $V$ be a finite-dimensional $K-$vector space. Two projective representations $\overline{\rho}, \overline{\rho}'\colon G\to \text{PGL}(V)$ are called \emph{similar} if there exists an automorphism $\mathcal{L}\colon V \to V$ such that, for all $g\in G$,
\[\overline{\mathcal{L}}\circ\overline{\rho}(g) = \overline{\rho}'(g)\circ\overline{\mathcal{L}}\]
where $\overline{\mathcal{L}}\colon \mathbb{P}(V)\to \mathbb{P}(V)$ is induced by $\mathcal{L}$. 
\end{defin}

\begin{lem}[\cite{isa76}, Page 178]\label{lem:lem1}
Let $G$ be a finite group, let $p\colon E\twoheadrightarrow G$ a Schur cover of $G$ and let $V$ a finite-dimensional $K-$vector space. Assume that there are two projective representations $\overline{\rho}, \overline{\rho}'\colon G\to \textnormal{PGL}(V)$ with respective $p-$lifts $\rho, \rho'\colon E\to \textnormal{GL}(V)$ as in \cref{diag:maindiag}. Then $\overline{\rho}$ and $\overline{\rho}'$ are similar if and only if there exists a homomorphism $\epsilon\colon E\to K^{\times}$ such that $\rho$ and $\epsilon\rho'$ are equivalent.
\end{lem}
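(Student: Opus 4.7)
The lemma is essentially a translation between equivalence in $\text{GL}(V)$ (up to scalar twists) and similarity in $\text{PGL}(V)$, so the plan is to unravel the definitions using the exactness of the vertical short exact sequence in \cref{diag:maindiag}, i.e.\ the fact that $\ker \pi = K^\times \cdot \text{Id}_V$, together with the surjectivity of $p\colon E \twoheadrightarrow G$ and the commutativity of the two squares. Both implications are formal, but the forward direction requires a small extra argument to promote a function $E \to K^\times$ to a homomorphism.

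For the easy direction ($\Leftarrow$), I would assume $\mathcal{L} \rho(e) \mathcal{L}^{-1} = \epsilon(e)\rho'(e)$ for all $e \in E$, apply $\pi$ to both sides, and use $\pi(\epsilon(e) \text{Id}_V) = \text{Id}$ together with the commutativity $\pi \circ \rho = \overline{\rho} \circ p$ (and likewise for $\rho'$) to conclude that $\overline{\mathcal{L}} \circ \overline{\rho}(p(e)) = \overline{\rho}'(p(e)) \circ \overline{\mathcal{L}}$ for every $e \in E$; surjectivity of $p$ then yields the similarity of $\overline{\rho}$ and $\overline{\rho}'$ in the sense of \cref{defin:def2}.

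For the forward direction ($\Rightarrow$), starting from an $\mathcal{L}$ witnessing similarity of $\overline{\rho}$ and $\overline{\rho}'$, the commutativity of the squares gives $\pi\bigl(\mathcal{L}\rho(e)\mathcal{L}^{-1}\bigr) = \pi\bigl(\rho'(e)\bigr)$ for every $e \in E$. Since $\ker \pi = K^\times \cdot \text{Id}_V$, there is a unique scalar $\epsilon(e) \in K^\times$ with $\mathcal{L}\rho(e)\mathcal{L}^{-1} = \epsilon(e)\rho'(e)$. The remaining (and only nontrivial) step is to check that $\epsilon\colon E \to K^\times$ is multiplicative: I would expand $\mathcal{L}\rho(e_1 e_2)\mathcal{L}^{-1}$ in two ways using that $\rho$ and $\rho'$ are group homomorphisms, and read off $\epsilon(e_1 e_2) = \epsilon(e_1)\epsilon(e_2)$ after cancelling $\rho'(e_1 e_2)$, which is invertible in $\text{GL}(V)$. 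This yields exactly that $\rho$ and $\epsilon\rho'$ are equivalent via $\mathcal{L}$.

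The only potential subtlety — which I would mention briefly — is the well-definedness and uniqueness of the scalar $\epsilon(e)$; this is what forces the assumption that $K$ is a field (so that $K^\times \cdot \text{Id}_V$ is precisely the center of $\text{GL}(V)$ captured by the short exact sequence), and it is what makes the whole argument work without any appeal to the specific structure of the Schur cover. Apart from that, no genuine obstacle arises: the lemma is a purely diagrammatic consequence of \cref{diag:maindiag}.
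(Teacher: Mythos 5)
Your proposal is correct and follows essentially the same route as the paper: both directions are obtained by pushing the relation through $\pi$ using the commutativity of \cref{diag:maindiag}, the exactness $\ker\pi = K^{\times}\textnormal{Id}_V$, and the surjectivity of $p$. Your explicit verification that $\epsilon$ is multiplicative (by expanding $\mathcal{L}\rho(e_1e_2)\mathcal{L}^{-1}$ in two ways) is a detail the paper leaves implicit, and it is a welcome addition.
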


\begin{proof}
Suppose that $\overline{\rho}$ and $\overline{\rho}'$ are similar and let $\mathcal{L} \in \textnormal{GL}(V)$ be such that, for all $g\in G$,
\[ \overline{\mathcal{L}}\circ\overline{\rho}(g)\circ\overline{\mathcal{L}}^{-1} = \overline{\rho}'(g).\]
By commutativity of \cref{diag:maindiag}, for all $e\in E$, one obtains that

\begin{align*}
    \pi(\mathcal{L}\circ\rho(e)\circ\mathcal{L}^{-1}) = \overline{\mathcal{L}}\circ\underbrace{(\pi(\rho(e)))}_{=\overline{\rho}(p(e))}\circ\overline{\mathcal{L}}^{-1}    &= \overline{\rho}'(p(e)) = \pi(\rho'(e)).
\end{align*}
Hence, for all $e\in E$, there exists a unique scalar $\epsilon(e)\in K^\times$ such that
$\mathcal{L}\circ\rho(e)\circ\mathcal{L}^{-1} = \epsilon(e)\rho'(e).$
By straightforward computations, one can show that the previous assignment $\epsilon\colon E\to K^{\times}$ is a group homomorphism, and $\rho$ and $\epsilon\rho'$ are equivalent. Now suppose that there exist a homomorphism $\epsilon\colon E\to K^{\times}$ and $\mathcal{L}\in\textrm{GL}(V)$ such that, for all $e\in E$,
\[ \mathcal{L}\circ\rho(e)\circ\mathcal{L}^{-1} = \epsilon(e)\rho'(e).\]
By commutativity of \cref{diag:maindiag} and surjectivity of $p$, it clear that $\overline{\rho}$ and $\overline{\rho}'$ are similar
\end{proof}

Note that, in the context of \cref{lem:lem1}, given a linear representation $\rho$ of $E$ on $V$ whose restriction to $M(G) = \ker(p)$ maps to $K^{\times}\text{Id}_V$, one can always define a projective representation of $G$ on $V$ which makes \cref{diag:maindiag} commute. Indeed, we can define a $p-$reduction of $\rho$ as $\overline{\rho} := \pi\circ\rho\circ s$ where $s$ is any section of $p$ that maps $1_G$ to $1_E$ (it can be easily shown that this definition does not depend on the choice of $s$). In this case, we say that $\rho$ is \emph{$p-$projective}.

\section{Parametrizing submodules of a given group algebra module}\label{sec:para}
\hspace{\parindent}  Let $K$ be an algebraically closed field of characteristic zero, let $E$ a finite group and let $M = (V, \rho)$ a $KE-$module. In this section we show that the set parametrizing the $KE-$submodules of $M$ of a given dimension is a projective variety, and that its irreducible components are rational.

\subsection{Invariant Grassmannians}\label{subsec:symgr}
\hspace{\parindent} Let $M = (V,\rho)$ be a $KE-$module, and let $n := \textnormal{dim}_K(V)$. For all $1\leq t\leq n$, we define $\text{Gr}(t, M)$ to be the set of $t-$dimensional $KE-$submodules of $M$. Using iteratively an argument from \cite[Theorem 5.32]{shuffle}, we have that $\text{Gr}(t, M)$ is a closed subvariety of the ordinary Grassmannian variety $\textnormal{Gr}(t,V)\subseteq \mathbb{P}(\bigwedge^t V)$. In general, $\text{Gr}(t, M)$ is not irreducible, and we give two ways to decompose it: we use the first one computationally to parametrize all $t-$dimensional submodules of $M$, and the second one can be used to compute the defining ideal of $\textnormal{Gr}(t, M)$ in $\mathbb{P}(\bigwedge^t V)$.\\

Let $\chi$ be the $K-$character of $E$ afforded by $M$. 

\begin{defin}
    For all $1\leq t\leq \chi(1_E)$, we define $C_{\chi}(t)$ to be the set of all $K-$characters of $E$ of degree $t$ which are a consistuent of $\chi$.
\end{defin}

For $1\leq t\leq \chi(1_E)$, each $\eta\in C_{\chi}(t)$ defines an equivalence class of $t-$dimensional submodules of $M$. For $\eta\in C_{\chi}(t)$, let $N_{\eta}$ be a $KE-$module affording $\eta$ and define
\[\text{Gr}(\eta, M) := \textnormal{Hom}^0_{KE}(N_{\eta}, M)/\textnormal{Aut}_{KE}(N_{\eta})\]
where $\textnormal{Hom}^0_{KE}(N_{\eta}, M)$ denotes the set of all injective $KE-$module homomorphisms from $N_\eta$ to $M$. By \cref{propo:samechar}, this definition does not depend on the choice of $N_{\eta}$, and one can see that $\textnormal{Gr}(\eta, M)$ corresponds to the set of $t-$dimensional submodules of $M$ affording $\eta$. Therefore, as a set, 
\[ \text{Gr}(t, M) = \bigsqcup_{\eta\in C_{\chi}(t)}\text{Gr}(\eta, M).\]

Now, let $M =\bigoplus_{\mu\in\textnormal{Irr}_K(E)}W_{\mu}^{\oplus f_{\mu}}$ be an isotypical decomposition of $M$ where for all $\mu\in\text{Irr}_K(E)$, $W_\mu$ affords $\mu$, and let $\eta = \sum_{\mu\in \textnormal{Irr}_K(E)}e_{\mu}\mu\in C_{\chi}(t)$. Then, the $KE-$module $\bigoplus_{\mu\in\textnormal{Irr}_K(E)}W_{\mu}^{\oplus e_{\mu}}$ affords $\eta$, and we have that

\[ \text{Gr}(\eta, M) = \text{Hom}^0_{KE}\left(\bigoplus_{\mu\in\textnormal{Irr}_K(E)}W_{\mu}^{\oplus e_{\mu}}, \bigoplus_{\mu\in\textnormal{Irr}_K(E)}W_{\mu}^{\oplus f_{\mu}}\right)\Bigg\slash\text{Aut}_{KE}\left(\bigoplus_{\mu\in\textnormal{Irr}_K(E)}W_{\mu}^{\oplus e_{\mu}}\right).\]
Using Schur's Lemma (\cref{lem:schur}), we compute
\begin{equation}\label{eq:isofac}
\text{Gr}(\eta, M) \simeq \prod_{\mu\in\text{Irr}_K(E),\;e_{\mu}\neq 0}\textnormal{Hom}^0_{KE}(W_\mu^{e_\mu}, W_\mu^{f_\mu})/\textnormal{Aut}_{KE}(W_\mu^{e_\mu}) = \prod_{\mu\in\text{Irr}_K(E),\;e_{\mu}\neq 0}\text{Gr}(e_{\mu}\mu, W_{\mu}^{f_{\mu}}).
\end{equation}

We are therefore reduced to understanding $KE-$submodules of isotypical $KE-$modules.

\subsection{Isotypical modules and Gauss elimination}
\hspace{\parindent} The goal of this subsection is to bring a constructive approach to the proof of the existence of a Gauss elimination theorem for isotypical $KE-$modules. Thanks to Schur's lemma, this immediatly follows from the fact that Grassmannians of isotypical submodules are the same as ordinary Grassmannians.\\

Let $M = W^{\oplus t}$ be isotypical of weight $n$ and $K-$dimension $tn$ with $W$ simple. We want to study the set $\text{Gr}(k, M)$ of equivalence classes of $k-$dimensional $KE-$submodules of $M$, for some $k\geq 1$. As a first remark, since the character $\chi$ afforded by $M$ is isotypical, it can be written as $\chi = t\mu$ where $\mu\in\text{Irr}_K(E)$ is afforded by $W$. So, in particular, since $\mu$ is of degree $n$, 
\[ \text{Gr}(k, M) = \left\{\begin{array}{ll}
\text{Gr}(r\mu, M)&\text{if $k = rn$ for some $1\leq r\leq t$}\\
\emptyset&\text{otherwise}\end{array}\right..\]

\begin{theo}[Gauss elimination]\label{theo:ge}
Let $M = W^{\oplus t}$ be an isotypical $KE-$module of weight $n$ and $K-$dimension $tn$. Then for all $1 \leq r\leq t$, the set $\textnormal{Gr}(rn, M)$ of $KE-$submodules of $M$ of dimension $rn$ can be identified with the ordinary Grassmannian $\textnormal{Gr}(r, t)$.
\end{theo}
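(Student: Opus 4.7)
The plan is to reduce the problem, via the decomposition already recorded in equation~\eqref{eq:isofac} and the remark preceding the theorem, to a direct application of Schur's lemma. First, since any $KE$-submodule $N \subseteq M$ of $K$-dimension $rn$ affords a degree $rn$ constituent of $\chi = t\mu$, and since $\mu$ is the only irreducible constituent of $\chi$, the only candidate character is $r\mu$. Thus $\textnormal{Gr}(rn, M) = \textnormal{Gr}(r\mu, M)$, and by the discussion in \cref{subsec:symgr} this equals $\textnormal{Hom}^0_{KE}(W^{\oplus r}, W^{\oplus t}) / \textnormal{Aut}_{KE}(W^{\oplus r})$, so the task is to identify this quotient with the usual Grassmannian $\textnormal{Gr}(r, t)$.

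Next, I would invoke \cref{lem:schur}: since $W$ is simple and $K$ is algebraically closed, the assignment sending a $KE$-homomorphism $\varphi \colon W^{\oplus r} \to W^{\oplus t}$ to its matrix of components in $\textnormal{Hom}_{KE}(W, W) \cong K$ produces a $K$-linear bijection
\[
\textnormal{Hom}_{KE}(W^{\oplus r}, W^{\oplus t}) \;\xrightarrow{\sim}\; \textnormal{Mat}_{t,r}(K).
\]
Under this identification, composition of $KE$-homomorphisms corresponds to matrix multiplication, so in particular $\textnormal{Aut}_{KE}(W^{\oplus r}) \cong \textnormal{GL}_r(K)$, acting on $\textnormal{Mat}_{t,r}(K)$ by right multiplication.

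The main (and essentially only) step to verify is that this identification sends injective $KE$-homomorphisms exactly to $t$-by-$r$ matrices of full rank $r$. For this I would argue as follows: if we fix an isomorphism $W^{\oplus s} \cong W \otimes_K K^s$ for $s \in \{r, t\}$, then the element of $\textnormal{Mat}_{t,r}(K)$ attached to $\varphi$ is precisely the matrix of the induced $K$-linear map $K^r \to K^t$ on the ``multiplicity'' factors, and $\varphi = \mathrm{id}_W \otimes \varphi_0$ is injective as a map of $KE$-modules if and only if $\varphi_0 \colon K^r \to K^t$ is injective, i.e.\ the matrix has rank $r$. Under these identifications, $\textnormal{Hom}^0_{KE}(W^{\oplus r}, W^{\oplus t})$ becomes the open subset of full-rank matrices in $\textnormal{Mat}_{t,r}(K)$.

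Finally, I would conclude by recalling the standard description of the Grassmannian: the quotient of the set of full-rank $t$-by-$r$ matrices by the right action of $\textnormal{GL}_r(K)$ is exactly $\textnormal{Gr}(r, t)$, with the orbit of a matrix corresponding to the $r$-dimensional subspace of $K^t$ spanned by its columns. Combining the three identifications yields a bijection $\textnormal{Gr}(rn, M) \xrightarrow{\sim} \textnormal{Gr}(r, t)$, as required. The only real subtlety is the rank-versus-injectivity step in the third paragraph; everything else is bookkeeping with Schur's lemma and standard Grassmannian facts.
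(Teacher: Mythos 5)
Your proposal is correct and follows essentially the same route as the paper's proof: reduce $\textnormal{Gr}(rn,M)$ to $\textnormal{Gr}(r\mu,M) = \textnormal{Hom}^0_{KE}(W^{\oplus r}, W^{\oplus t})/\textnormal{Aut}_{KE}(W^{\oplus r})$, apply Schur's lemma to identify this with full-rank matrices modulo $\textnormal{GL}_r(K)$, and recognize the usual Grassmannian. The only difference is that you spell out the rank-versus-injectivity verification, which the paper leaves implicit.
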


\begin{proof}
    Let $1\leq r\leq t$ and let $\chi = t\mu$, with $\mu\in\text{Irr}_K(E)$, be the character afforded by $M$. Using the fact that $\text{Gr}(rn, M) = \text{Gr}(r\mu, M)$, since $M$ is isotypical, we have that
    \[\text{Gr}(rn, M) = \text{Hom}^0_{KE}(W^{\oplus r}, W^{\oplus t})\big\slash \text{Aut}_{KE}(W^{\oplus r}).\]
    Fixing a basis of the underlying $K-$vector space of $W$, Schur's Lemma (\cref{lem:schur}) gives us then that
    \[\text{Gr}(rn, M) \simeq \text{Mat}^0_{r, t}(K)\big\slash\text{GL}_r(K) = \text{Gr}(r, t)\]
    where $\text{Mat}^0_{r, t}(K)$ denotes the set of full rank $r-$by$-t$ matrices with entries in $K$.
\end{proof}

The important point of the proof of \cref{theo:ge} is the following: given a $KE-$module $M = \bigoplus_{\mu\in\text{Irr}_K(E)} W_{\mu}^{f_{\mu}}$, one can algorithmically compute a basis of $\text{Hom}_{KE}(W_{\mu}, M)$ for all $\mu$ such that $f_{\mu} \neq 0$. In fact, denoting $M = (V, \rho)$ and $W_{\mu} = (V', \rho')$, and fixing respective $K-$bases $B$ and $B'$ of $V$ and $V'$, $\text{Hom}_{KE}(W_{\mu}, M)$ corresponds to the set $\text{M}_{\rho, \rho'}(B, B')$ of $\text{dim}_K(V)\times\text{dim}_K(V')$ matrices $P$ such that, for all $e\in E$,
\[ \rho(e)P = P\rho'(e).\]
(Here we identify $\text{GL}(V)$ and $\text{GL}(V')$ with the respective groups of invertible matrices using the fixed bases $B$ and $B'$) This is a $K-$vector space of finite dimension $f_{\mu}$ and any non-zero matrix in $\text{M}_{\rho, \rho'}(B, B')$ is of full rank (since $W_{\mu}$ is simple). A basis of this vector space can be computed using for instance an algorithm of \cite[Theorem 2]{algomod}. In particular, for all $1\leq e \leq f_{\mu}$, one has that any embedding $W_{\mu}^e\hookrightarrow M$ corresponds to the choice of an $e-$space in $\text{M}_{\rho, \rho'}(B, B')$.

\subsection{Rationality and irreducible components}\label{subsec:moduli}
\hspace*{\parindent} In this subsection we show that the space $\text{Gr}(\eta, M)$ of $KE-$submodules with a given character $\eta$ of any $KE-$module $M$ is a rational projective variety. Once again, thanks to Schur's lemma, this is a direct consequence of the fact that $\text{Gr}(\eta, M)$ is a finite product of ordinary Grassmannians.\\

Let $M$ be a $KE-$module and let us denote $\chi$ the character afforded by $M$.
\begin{theo}\label{theo:decgra}
For all $1\leq t\leq \chi(1_E)$ and for all $\eta\in C_{\chi}(t)$, the space $\textnormal{Gr}(\eta, M)$ is a rational subvariety of $\textnormal{Gr}(t, M)$ of dimension $\langle \eta, \chi-\eta\rangle$. In particular, $\left\{\textnormal{Gr}(\eta, M)\right\}_{\eta\in C_{\chi}(t)}$ is the set of irreducible components of $\textnormal{Gr}(t, M)$.
\end{theo}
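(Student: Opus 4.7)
The strategy is to combine the product decomposition \eqref{eq:isofac} with the Gauss elimination result of \cref{theo:ge}. First, fix $1\leq t\leq \chi(1_E)$ and $\eta = \sum_\mu e_\mu\mu \in C_\chi(t)$. Equation \eqref{eq:isofac} gives an isomorphism
\[\textnormal{Gr}(\eta, M)\simeq \prod_{\substack{\mu\in\textnormal{Irr}_K(E)\\ e_\mu\neq 0}}\textnormal{Gr}(e_\mu\mu, W_\mu^{f_\mu}),\]
and by \cref{theo:ge} each factor on the right is isomorphic to the usual Grassmannian $\textnormal{Gr}(e_\mu, f_\mu)$. Since ordinary Grassmannians are rational irreducible projective varieties, their finite product is rational, irreducible and projective, so $\textnormal{Gr}(\eta, M)$ inherits these properties as a subvariety of $\textnormal{Gr}(t, M) \subseteq \textnormal{Gr}(t, V)$.

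For the dimension, writing $\chi = \sum_\mu f_\mu\mu$ one has $\chi - \eta = \sum_\mu(f_\mu - e_\mu)\mu$, so by the very definition of the scalar product of characters
\[\langle \eta, \chi - \eta\rangle = \sum_\mu e_\mu(f_\mu - e_\mu) = \sum_\mu \dim \textnormal{Gr}(e_\mu, f_\mu) = \dim \textnormal{Gr}(\eta, M),\]
where the last equality uses the product decomposition above and the classical dimension formula for Grassmannians.

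For the final claim, the set-theoretic decomposition $\textnormal{Gr}(t, M) = \bigsqcup_{\eta\in C_\chi(t)}\textnormal{Gr}(\eta, M)$ is disjoint by construction, since the character afforded by a submodule is a well-defined invariant. To upgrade it to a decomposition into irreducible components, I would show that each stratum is closed in $\textnormal{Gr}(t, M)$. For any $e\in E$, the assignment $U \mapsto \textnormal{tr}(\rho(e)|_U)$ makes sense on $\textnormal{Gr}(t, M)$ because the tautological subbundle of rank $t$ is preserved by $\rho(e)$ there, and it defines a regular function (locally one can represent a point of $\textnormal{Gr}(t, V)$ by a $t\times \dim_K V$ matrix $P$ of full rank and express the trace as a polynomial function of the entries of $P$ and $P\rho(e)$). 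Restricted to $\textnormal{Gr}(t, M)$, this regular function takes only the finitely many values $\{\eta(e) : \eta \in C_\chi(t)\}$, so the map $U\mapsto\chi_U$ is regular with discrete image, hence locally constant. Consequently every $\textnormal{Gr}(\eta, M)$ is open and closed in $\textnormal{Gr}(t, M)$; being irreducible closed subsets that are pairwise disjoint and jointly cover $\textnormal{Gr}(t, M)$, they are exactly the irreducible components.

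The main obstacle I anticipate is precisely the last step: verifying that the character stratification is by closed subvarieties, rather than merely by constructible strata. The preceding parts reduce quickly to classical Grassmannian geometry through \cref{theo:ge}, but the irreducible-component statement needs the discrete-invariant argument above (or, alternatively, an explicit realization of each stratum as the image of a closed embedding into $\mathbb{P}(\bigwedge^t V)$ compatible with the Plücker coordinates).
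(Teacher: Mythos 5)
Your argument follows essentially the same route as the paper: both combine the product decomposition \eqref{eq:isofac} with \cref{theo:ge} to identify $\textnormal{Gr}(\eta, M)$ with a product of ordinary Grassmannians $\prod_{\mu}\textnormal{Gr}(e_\mu, f_\mu)$, deduce rationality, and obtain the dimension $\langle\eta,\chi-\eta\rangle$ from the classical count $\dim\textnormal{Gr}(r,t)=r(t-r)$ together with the orthogonality of irreducible characters. The one place you genuinely add something is the final claim. The paper stops after the dimension computation and treats the identification of the $\textnormal{Gr}(\eta,M)$ with the irreducible components as immediate from the disjoint decomposition into irreducible pieces; it never verifies that each stratum is closed in $\textnormal{Gr}(t,M)$. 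Your discrete-invariant argument --- that for each $e\in E$ the function $U\mapsto\textnormal{tr}(\rho(e)|_U)$ is regular on $\textnormal{Gr}(t,M)$ (solving $P\rho(e)=QP$ for $Q$ on a standard affine chart) and takes only the finitely many values $\eta(e)$, so the character strata are open and closed --- supplies exactly this missing step, and your concluding observation that pairwise disjoint closed irreducible subsets covering a variety are its irreducible components is correct. So the proposal is sound and, on this last point, more complete than the paper's own proof; the only caveat, shared with the paper, is that both treatments are somewhat informal about why the scheme structure transported from $\prod_\mu\textnormal{Gr}(e_\mu,f_\mu)$ agrees with the reduced subvariety structure induced from $\textnormal{Gr}(t,V)$.
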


\begin{proof}
Using \cref{eq:isofac} and \cref{theo:ge}, one can endow $\text{Gr}(\eta, M)$ with a scheme structure as a direct product of rational varieties, allowing us to see it as rational subvariety of $\text{Gr}(t, M)$. More presicely, if $M = \bigoplus_{\mu\in\textnormal{Irr}_K(E)}W_{\mu}^{\oplus f_{\mu}}$, we set that
\[\text{Gr}(\eta, M) \simeq \prod_{\mu\in\text{Irr}_K(E),\;e_{\mu}\neq 0}\text{Gr}(e_{\mu}, f_{\mu})\]
as a projective variety. It is known that the ordinary Grassmannian $\textnormal{Gr}(r,t)$ has dimension $r(t-r)$, as a projective variety. Thus, one deduces that
\[\textnormal{dim}(\textnormal{Gr}(\eta, M)) =\sum_{\mu\in\textnormal{Irr}_K(E),\;e_{\mu}\neq 0} e_\mu(f_\mu-e_\mu) = \sum_{\mu\in\textnormal{Irr}_K(E),\;e_{\mu}\neq 0}\langle e_\mu\mu, f_\mu\mu - e_\mu\mu\rangle = \langle \eta, \chi-\eta\rangle\]
by the orthogonality relations between simple characters with respect to the scalar product.
\end{proof}

\cref{theo:decgra} offers a feasible way to parametrising $t-$dimensional submodules of a given $KE-$ module $M$. Indeed, for all $t-$dimensional constituent $\eta$ of the character $\chi$ of $M$, one may use \cref{theo:ge} and \cref{theo:decgra} to construct a concrete parametrisation of $\text{Gr}(\eta, M)$.

\subsection{Determinant character}
\hspace*{\parindent} We state at the beginning of \cref{subsec:symgr} that $\textnormal{Gr}(t, M)$ is a closed subvariety of $\textnormal{Gr}(t ,V)\subseteq \mathbb{P}(\bigwedge^t V)$, where $M := (V, \rho)$ is a $KE-$module. In this subsection, we give another decomposition of $\textnormal{Gr}(t, M)$ by considering the \emph{determinant character} of the $t-$dimensional submodules of $M$. We show how one can use this decomposition in order to compute the defining ideal of $\textnormal{Gr}(t, M)$ in the Pl\"ucker space $\mathbb{P}(\bigwedge^t V)$.\\

Let $1\leq t \leq \chi(1_E)$, where we recall that $\chi$ is the character afforded by a fixed $KE-$module $M = (V, \rho)$. Any element of the $t-$th exterior power $\bigwedge^tV$ of $V$ over $K$ is called a \emph{$t-$vector} of $V$ and those of the form $v_1\wedge\ldots\wedge v_t$ are called \emph{pure}. Any element of $\bigwedge^tV$ can be written as a finite sum of pure $t-$vectors. There is moreover an induced action of $E$ on $\bigwedge^tV$ given by, for all $e\in E$ and for any pure $t-$vector $v_1\wedge\ldots\wedge v_t$ of $V$,
\begin{equation}\label{eq:wedge}
    e\cdot(v_1\wedge\ldots\wedge v_t) := (\rho(e) v_1)\wedge\ldots\wedge (\rho(e)v_t).
\end{equation}
We denote by $\bigwedge^t\rho$ the previous representation of $E$ on $\bigwedge^tV$ and $\bigwedge^tM := (\bigwedge^tV, \bigwedge^t\rho)$ the corresponding $KE-$module. We call it the \emph{$t-$th exterior power} of $M$.

\begin{rem}\label{rem equiv closed embed}
The action defined in \cref{eq:wedge} is so that the \emph{Pl\"ucker embedding} $\text{Gr}(t, V) \hookrightarrow\mathbb{P}(\bigwedge^tV)$ is equivariant with respect to $\rho$ and $\bigwedge^t\rho$. The image of $\textnormal{Gr}(t, V)$ under this closed embedding consists of all lines spanned by a pure $t-$vector in $\bigwedge^tV$. Hence one deduces that $\textnormal{Gr}(t, M)$ is non-empty if and only if $\bigwedge^tM$ admits a $1-$dimensional $KE-$submodule whose underlying $K$-vector space is spanned by a pure $t-$vector. If we let $\bigwedge^tM = \bigoplus_{\mu\in\textnormal{Irr}_K(E)}U_{\mu}^{\oplus g_\mu}$ be an isotypical decomposition of $\bigwedge^tM$, then the latter holds if and only if there exists $\mu\in\textnormal{Irr}^1_K(E)$ such that $g_\mu \neq 0$. Indeed, any $E-$invariant line of $\bigwedge^tV$ lies in one of the isotypical components of weight 1 of $\bigwedge^tM$.
\end{rem}

For any $\eta\in C_{\chi}(t)$, we call the \emph{determinant character} of $\eta$, denoted $\textnormal{det}(\eta)$, the character afforded by the $t-$th exterior power of any $KE-$module affording $\eta$. This is a $1-$dimensional character, constituent of the character $\bigwedge^t\chi$ afforded by $\bigwedge^tM$. Note that two distinct constituents $\eta, \eta'\in C_{\chi}(t)$ of $\chi$ can have the same determinant character. For any $\mu\in\text{Irr}^1_K(E)$, we denote $\text{Gr}_\mu(t, M)\subseteq \text{Gr}(t, M)$ the subset of $t-$dimensional $KE-$submodules of $M$ having determinant character equal to $\mu$. Then we have the decompositions, as sets,
\[ \text{Gr}(t, M) = \bigsqcup_{\mu\in\textnormal{Irr}^1_K(E)} \text{Gr}_\mu(t, M)\]
and for all $\mu\in\textnormal{Irr}^1_K(E)$
\[ \text{Gr}_\mu(t, M) = \bigsqcup_{\eta\in C_{\chi}(t), \textnormal{det}(\eta) = \mu}\text{Gr}(\eta, M).\]

By identifying the Grassmannian variety $\text{Gr}(t, V)$ with its image via the Pl\"ucker embedding,
our \Cref{rem equiv closed embed} tells us that
\[ \text{Gr}_\mu(t, M) \simeq \mathbb{P}(U_{\mu}^{g_{\mu}})\cap \text{Gr}(t, V).\]
In this way, we can define a scheme structure on $\textnormal{Gr}_\mu(t, M)$ turning it into a closed subvariety of $\textnormal{Gr}(t, M)$, allowing us to effictively compute the defining ideal of $\text{Gr}(t, M)$ in the Pl\"ucker space.

\section{Finding intersections with prescribed symmetry}\label{sec:2}
\hspace{\parindent} 
In this section, we show how one can use the method explained in the previous sections to find the defining ideals of intersections of hypersurfaces that are fixed by a linear action of a finite group on their ambient complex projective space.\\

Let $X$ be a complex projective variety in $\mathbb{P}^n_{\mathbb{C}}$ given as an intersection of $t$ hypersurfaces of the same degree $d$. The ideal $I$ defining $X$ is homogeneous and generated by $t$ homogeneous polynomials $f_1,\ldots, f_t\in\mathbb{C}[x_0,\ldots,x_n]$ of common total degree $d$.
Let $G$ be a finite group and let $p\colon E\to G$ be a Schur cover of $G$. Suppose that there exists a faithful linear action of $G$ on $\mathbb{P}^n_{\mathbb{C}}$ preserving $X$, which is not necessarily given (we assume existence without any explicit description). Our strategy to find $I$ consists in classifying all faithful linear actions of $G$ on $\mathbb{P}^n_{\mathbb{C}}$ and in determining, for each class of them, a parametrization of all ideals defining intersections of the same type of $X$, and which are preserved under the provided action of $G$.
Any linear action of $G$ on $\mathbb{P}^n_{\mathbb{C}}$ is given by a projective representation $\overline{\rho}\colon G\to \textnormal{PGL}(\mathbb{C}^{n+1})$, which can be lifted along $p$ to $\rho\colon E\to\textrm{GL}(\mathbb{C}^{n+1})$ making the following commutative diagram with exact rows commute
\begin{equation}\label[Diagram]{diag:maindiagbis}
\begin{tikzcd}
1\arrow[r]&H\arrow[d,"\beta"']\arrow[r,"i"]&E\arrow[d,"\rho"']\arrow[r, "p"]&G\arrow[d, "\overline{\rho}"']\arrow[r]&1\\
1\arrow[r]&\mathbb{C}^{\times}\arrow[r,"\cdot\textnormal{Id}_{\mathbb{C}^{n+1}}"']&\textnormal{GL}(\mathbb{C}^{n+1})\arrow[r,"\pi"']&\textnormal{PGL}(\mathbb{C}^{n+1})\arrow[r]&1
\end{tikzcd}.
\end{equation}

\subsection{From invariant ideals to group algebra modules}\label{subsec:redu}
\hspace{\parindent} In this subsection, we fix a linear action of $G$ on $\mathbb{P}^n_{\mathbb{C}}$ and we assume that $X$ is preserved under this action. We then transform the problem of finding the defining ideal of the intersection $X$ into finding a $t-$dimensional $\mathbb{C}E-$submodule of $R_d$, the $d-$homogeneous part of the polynomial algebra associated to $\mathbb{C}^{n+1}$.\\

We denote by $R_{\bullet} := \bigoplus_{h\geq 0} \mathbb{C}[x_0,\ldots,x_n]_h$ the $\mathbb{Z}-$graded $\mathbb{C}-$algebra of polynomials in $n+1$ variables. Considering \cref{diag:maindiagbis}, the action of $E$ on $\mathbb{C}^{n+1}$ defined by $\rho\colon E\to \textnormal{GL}(\mathbb{C}^{n+1})$ naturally induces, for all $h\geq 0$, a linear action on $R_h$. It is given as follows: for any $h\geq 0$, any $P\in R_h$, any $e\in E$ and any $x\in \mathbb{C}^{n+1}$,
\begin{equation*}\label{eq:hompol}
    (e\cdot P)(x) := P(\rho(e)^{-1}(x)).
\end{equation*} 
It is a well-defined action, because the action of $E$ on $\mathbb{C}^{n+1}$ is linear, which we denote by $\rho_h$. Collecting these actions for all $h\geq 0$ gives $(R_{\bullet}, \rho_{\bullet})$ the structure of a $\mathbb{C}E-$algebra - $R_{\bullet}$ is a $\mathbb{Z}-$graded $\mathbb{C}-$algebra and all of its homogeneous components $R_h$ ($h\geq 0$), equipped with the action $\rho_h$, are $\mathbb{C}E-$modules. The following key result shows how to simplify the search for the ideal $I$ defining $X$ by restricting ourselves into determining a basis for a finite-dimensional vector space.

\begin{propo}\label{propo:invideal}
Let $K$ be a field, let $E$ be a group and let $(R_{\bullet}, \rho_{\bullet})$ be a $\mathbb{Z}-$graded $KE-$algebra. Let $I$ be a homogeneous ideal of $R_{\bullet}$ being finitely generated by $t$ homogeneous elements $r_1,\ldots, r_t\in R_{\bullet}$ of respective degrees $d_1,\ldots,d_t$. We denote by $I_h := I\cap R_h$ the $h-$homogeneous part of $I$. Then, $I$ is invariant for the given action of $E$ on $R_{\bullet}$ if and only if $(I_{d_i}, \rho_{d_i})$ is a $KE-$submodule of $(R_{d_i}, \rho_{d_i})$ for all $i=1,\ldots,t$ (here we use the same notation for the restriction of $\rho_h$ to $I_h$, $h\geq 0$).
\end{propo}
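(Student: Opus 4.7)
The plan is to prove both implications essentially by unpacking definitions, using the graded structure of $R_\bullet$ and the fact that the $E$-action is by graded $K$-algebra automorphisms.

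First I would dispatch the forward direction, which is immediate. If $I$ is $E$-invariant, then since $\rho_\bullet$ is a graded representation each homogeneous component $R_h$ is $E$-invariant, hence so is the intersection $I_h = I\cap R_h$. In particular $I_{d_i}$ is a $K$-linear subspace of $R_{d_i}$ stable under $\rho_{d_i}$, which is exactly the statement that $(I_{d_i},\rho_{d_i})$ is a $KE$-submodule of $(R_{d_i},\rho_{d_i})$.

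For the reverse direction, the key ingredient is that the action $\rho_\bullet$ is multiplicative, i.e.\ $e\cdot(ab) = (e\cdot a)(e\cdot b)$ for all $a,b\in R_\bullet$ and $e\in E$. This is the algebra-automorphism condition packaged in the phrase ``$KE$-algebra'' and is verified directly in the concrete setup from the formula $(e\cdot P)(x) = P(\rho(e)^{-1}x)$. Granted this, I would take an arbitrary $r\in I$ and, using that $I = (r_1,\ldots,r_t)$, write $r = \sum_{j=1}^t s_j r_j$ with $s_j\in R_\bullet$. Then for any $e\in E$,
\[
e\cdot r \;=\; \sum_{j=1}^t (e\cdot s_j)(e\cdot r_j).
\]
By the standing hypothesis, $e\cdot r_j \in I_{d_j}\subseteq I$ for each $j$, so each summand lies in $I$ because $I$ is an ideal; hence $e\cdot r\in I$, which proves that $I$ is $E$-invariant.

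The only conceptual subtlety is to make explicit the multiplicativity of the $E$-action on $R_\bullet$, which is not literally spelled out in the bare definition given above but is part of what is meant by a graded $KE$-algebra (and is automatic in the situation of interest, where the action comes by pull-back from a linear action on $\mathbb{C}^{n+1}$). I do not expect any further obstacle: the remainder is routine manipulation with a generating set of the ideal.
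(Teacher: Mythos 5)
Your proof is correct and follows essentially the same route as the paper's: the forward direction by intersecting the invariant ideal with each degree-$d_i$ graded piece, and the converse by writing an arbitrary element of $I$ in terms of the generators and using that $E$ acts by graded algebra automorphisms. If anything, you are more explicit than the paper about the one genuinely needed ingredient in the converse, namely the multiplicativity $e\cdot(ab)=(e\cdot a)(e\cdot b)$, which the paper's proof invokes only implicitly via the phrase ``$(R_{\bullet},\rho_{\bullet})$ is a $KE$-module.''
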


\begin{proof}
First remark that $I= \bigoplus_{h\in\mathbb{Z}} I_h = \sum_{i=1}^t I_{h_i}$ as $R_0-$modules since $I$ is generated by the $t$ homogeneous elements $r_1,\ldots,r_t$. Therefore, if $E\cdot I = I$ (i.e. $I$ is $E-$invariant) then for all $i=1,\ldots,t$, we see that $E\cdot I_{h_i} = E\cdot (I\cap R_{h_i})\subseteq I_{h_i}$ because $E$ acts on $R_{h_i}$. Therefore, $(I_{h_i}, \rho_{h_i})$ is a $\mathbb{C}E-$submodule of $(R_{h_i}, \rho_{h_i})$, for all $i=1,\ldots,t$.\\
Now suppose that for all $i=1,\ldots,t$, the $h_i-$ homogeneous part $I_{h_i}$ of $I$ is $E-$invariant. Since $I$ is generated by  $\bigcup_{i=1}^tI_{h_i}$ as a $R_{\bullet}-$module and $(R_{\bullet}, \rho_{\bullet})$ is a $KE-$module, then $I$ is $E-$invariant.
\end{proof}

Considering \cref{diag:maindiagbis} with $R_d$ instead of $\mathbb{C}^{n+1}$, we know that $\rho_d$ reduces to a unique projective representation of $G$ on $R_d$. By commutativity of \cref{diag:maindiagbis} and \cref{propo:invideal} one sees that $X$ is preserved under the action of $G$ on $\mathbb{P}^{n}_{\mathbb{C}}$ if and only if $(I_d, \rho_d)$ is a $t-$dimensional $\mathbb{C}E-$submodule of $(R_d, \rho_d)$. 

\begin{rem}
    This is where the notion of {\em invariant Grassmannian} previously defined has importance. An $E-$invariant homogeneous ideal $I_d$ in $R_d$ might not be generated by semi-invariant polynomials, i.e. homogeneous polynomials whose $\mathbb{C}-$spans in $R_d$ are themselves $\mathbb{C}E-$modules. Indeed, if the underlying $\mathbb{C}E-$module of $I_d$ is simple of dimension greater than one, then the ideal cannot be generated by such semi-invariant polynomials. This is also why we cannot use the known algorithms from invariant theory for finite groups, as described in \cite{dk15} for instance.
\end{rem}

\subsection{Classification of projectively faithful representations}\label[subsec]{subsec:projsuit}
\hspace{\parindent} In \cref{defin:def2} we define an equivalence relation on the set of linear representations of $E$. In this subsection, we define another equivalence relation, coarser than the previous one. More precisely, we classify $p-$projective linear representations of $E$ having faithful reductions, up to similarity of their respective $p-$reductions to $G$.\\

Recall that we are given from \cref{diag:maindiagbis} a Schur cover $p\colon E\twoheadrightarrow G$ of $G$. 
\begin{defin}\label{def:projsui}
Let $V$ be a finite-dimensional $\mathbb{C}-$vector space. A $p-$projective linear representation $\rho\colon E\to \textnormal{GL}(V)$ is said to be \emph{$p-$projectively faithful} if its $p-$reduction $\bar{\rho}\colon G\to\text{PGL}(V)$ is faithful.
\end{defin}

By commutativity of \cref{diag:maindiagbis}, and by surjectivity of $p$, we see that any $p-$projective representation $\rho$ of $E$ on $V$ is $p-$projectively faithful if and only 
\[\ker(\pi\circ\rho) = \ker(p).\]

 In order to find a $\mathbb{C}E-$module $W$ whose underlying vector space generates the defining ideal $I$ of $X$ (see \cref{subsec:redu}), we start by classifying the $p-$projectively faithful representations of $E$ on $\mathbb{C}^{n+1}$. 

\begin{defin}[\cite{isa76}, Definition (2.26)]\label{def:center}
Let $\chi$ be the $\mathbb{C}-$character of $E$ afforded by a $\mathbb{C}E-$module $(V, \rho)$. Then, the \emph{center of the character} $\chi$ is defined to be 
\[ Z(\chi) := \left\{e\in E \;\Bigg\mid\; \frac{\chi(e)}{\chi(1_E)}\;\textrm{is a root of unity} \right\}.\]
\end{defin}

\begin{propo}\label{propo:centerker}
With the notations of \cref{def:center} and \cref{diag:maindiagbis}, $Z(\chi) = \ker(\pi\circ\rho)$.
\end{propo}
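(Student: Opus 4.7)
My plan is to establish the two inclusions separately, exploiting the finiteness of $E$ to ensure that $\rho(e)$ is always diagonalizable with eigenvalues that are roots of unity.

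For the inclusion $\ker(\pi\circ\rho)\subseteq Z(\chi)$, I would start with $e\in\ker(\pi\circ\rho)$. By exactness of the bottom row of \cref{diag:maindiagbis}, this means $\rho(e)\in\ker(\pi) = K^{\times}\cdot\textnormal{Id}_V$, so $\rho(e) = \lambda\textnormal{Id}_V$ for some $\lambda\in K^{\times}$. Because $E$ is finite, $\rho(e)$ has finite order, forcing $\lambda$ to be a root of unity. Taking traces, $\chi(e) = \lambda\cdot\dim_K(V) = \lambda\cdot\chi(1_E)$, so $\chi(e)/\chi(1_E) = \lambda$ is a root of unity and $e\in Z(\chi)$.

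For the reverse inclusion $Z(\chi)\subseteq\ker(\pi\circ\rho)$, let $e\in Z(\chi)$. Since $\rho(e)$ has finite order it is diagonalizable, with eigenvalues $\lambda_1,\ldots,\lambda_n$ that are roots of unity, where $n = \chi(1_E) = \dim_K(V)$. Then $\chi(e) = \sum_{i=1}^n\lambda_i$, and by hypothesis $|\chi(e)/\chi(1_E)|=1$, hence $\left|\sum_{i=1}^n\lambda_i\right| = n = \sum_{i=1}^n|\lambda_i|$. The equality case of the triangle inequality for complex numbers of modulus $1$ forces $\lambda_1 = \cdots = \lambda_n$, so $\rho(e)$ is a scalar operator. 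Therefore $\pi(\rho(e)) = 1$ and $e\in\ker(\pi\circ\rho)$.

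The main (and only real) subtlety is the triangle-inequality step in the second direction: one has to explicitly use that \emph{all} the eigenvalues lie on the unit circle and invoke the rigidity of the equality case to conclude that $\rho(e)$ is homothetic. Everything else is a direct translation between the matrix being a scalar and its trace having maximal possible modulus.
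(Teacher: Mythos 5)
Your argument is correct, but it takes a more self-contained route than the paper. The paper's proof simply invokes \cite[Lemma (2.27)]{isa76}, which asserts the characterization $Z(\chi) = \{e \in E \mid \rho(e) \in \mathbb{C}^{\times}\textnormal{Id}_V\}$, and then both inclusions follow immediately from the exactness of the bottom row of \cref{diag:maindiagbis}, i.e.\ from $\ker(\pi) = \mathbb{C}^{\times}\textnormal{Id}_V$. What you have done is reprove that cited lemma from first principles: the easy direction by taking traces of a scalar of finite order, and the substantive direction by diagonalizing $\rho(e)$ (using finiteness of $E$), observing that its eigenvalues are roots of unity, and invoking the equality case of the triangle inequality to force all eigenvalues to coincide. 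This is exactly the standard proof of Isaacs' lemma, so you gain independence from the reference at the cost of a slightly longer argument; note also that your use of absolute values ties the argument to $\mathbb{C}$ (or to a subfield of $\mathbb{C}$ containing the relevant roots of unity), which is harmless here since \cref{def:center} and \cref{diag:maindiagbis} are stated over $\mathbb{C}$, whereas the paper's citation keeps the bookkeeping purely formal. Either way, the statement stands.
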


\begin{proof}
According to \cite[Lemma (2.27)]{isa76},
\begin{equation}\label{eq:centerchi}
    Z(\chi) = \left\{ e\in E\mid \rho(e)\in \mathbb{C}^{\times}\textnormal{Id}_V\right\}
\end{equation} 
so the first inclusion $Z(\chi)\subseteq \ker(\pi\circ\rho)$ holds. Now, let $e\in E$ be such that $\pi(\rho(e)) = 1_{\textnormal{PGL}(V)}$. In particular $\rho(e) \in \ker(\pi) = \mathbb{C}^{\times}\textnormal{Id}_V$. Therefore according to \cref{eq:centerchi}, we have that $e\in Z(\chi)$. 
\end{proof}

\begin{coropro}\label{coro:1}
A projective representation $\overline{\rho}\colon G\to \textnormal{PGL}(V)$ is faithful if and only if $Z(\chi) = \ker(p)$ where $\chi$ is afforded by any $p-$lift $\rho\colon E\to\textnormal{GL}(V)$ of $\overline{\rho}$.
\end{coropro}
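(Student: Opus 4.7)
The plan is to derive this as a direct consequence of \cref{propo:centerker} together with the commutativity of \cref{diag:maindiagbis} and the surjectivity of $p$. First I would translate the faithfulness of $\overline{\rho}$ into the language of kernels: $\overline{\rho}$ is faithful iff $\ker(\overline{\rho}) = \{1_G\}$. Since the bottom-right square of \cref{diag:maindiagbis} commutes, we have the identity $\overline{\rho}\circ p = \pi\circ\rho$, and hence
\[\ker(\pi\circ\rho) = p^{-1}(\ker(\overline{\rho})).\]

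Next I would exploit surjectivity of $p$: because $p$ is surjective, $p^{-1}(\{1_G\}) = \ker(p)$, and more generally $\ker(\overline{\rho}) = \{1_G\}$ if and only if $p^{-1}(\ker(\overline{\rho})) = \ker(p)$ (for the nontrivial direction, if $p^{-1}(\ker(\overline{\rho})) = \ker(p)$, then every $g\in\ker(\overline{\rho})$ has a preimage under $p$ lying in $\ker(p)$, so $g = 1_G$). Combining the last two displayed/stated equivalences gives
\[\overline{\rho}\text{ is faithful}\iff \ker(\pi\circ\rho) = \ker(p).\]
Finally I would substitute $\ker(\pi\circ\rho) = Z(\chi)$ from \cref{propo:centerker} to obtain the desired equivalence.

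The "any $p$-lift" clause requires a small observation: although two $p$-lifts $\rho$ and $\rho'$ of $\overline{\rho}$ differ by a character $\epsilon\colon E\to \mathbb{C}^\times$ (cf. \cref{lem:lem1}), the map $\pi\circ\rho = \overline{\rho}\circ p$ depends only on $\overline{\rho}$, so $Z(\chi) = \ker(\pi\circ\rho)$ is intrinsic to $\overline{\rho}$ and independent of the chosen lift. This is the only subtlety; there is no real obstacle, since the whole statement is essentially a rephrasing of \cref{propo:centerker} through the commutative diagram.
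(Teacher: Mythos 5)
Your proof is correct and follows essentially the same route as the paper: the text immediately preceding the corollary already records that, by commutativity of \cref{diag:maindiagbis} and surjectivity of $p$, faithfulness of $\overline{\rho}$ is equivalent to $\ker(\pi\circ\rho)=\ker(p)$, and the corollary is then obtained by substituting $Z(\chi)=\ker(\pi\circ\rho)$ from \cref{propo:centerker}. Your extra remark that $\pi\circ\rho=\overline{\rho}\circ p$ is independent of the chosen $p$-lift, so that $Z(\chi)$ is intrinsic to $\overline{\rho}$, is a correct and welcome justification of the ``any $p$-lift'' clause, which the paper leaves implicit.
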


Using \cref{coro:1}, we are now able to decide whether a linear representation of $E$ is $p-$ projectively faithful or not. In fact, checking $p-$projectivity and the condition of \cref{coro:1} are both possible using only character theory. Now, let us note that if $\overline{\rho}$ and $\overline{\rho}'$ are two similar projective representations of $G$ on $V$, then there exists an automorphism $\mathcal{L}\in\textnormal{Aut}(V)$ such that, for all $g\in G$,
\begin{equation*}\label{eq:projeq}
    \overline{\mathcal{L}}\circ\overline{\rho}(g)\circ\overline{\mathcal{L}}^{-1} = \overline{\rho}'(g).
\end{equation*}
This means that if a projective variety in $\mathbb{P}(V)$ is preserved under $\overline{\rho}$, then after a projective base change, it will also be preserved under $\overline{\rho}'$, and vice-versa. We can therefore consider only one representatives of each similarity class, and thus classify $p-$projectively faithful representations of $E$ up to similarity of their respective $p-$reductions. Using \cref{lem:lem1}, this is equivalent to classifying $p-$projectively faithful representations of $E$ on $\mathbb{C}^{n+1}$ by equivalence modulo $\textnormal{Irr}^1_{\mathbb{C}}(E)$. Let us denote $\text{PFR}(E, G, \mathbb{C}^{n+1})$ a set of representatives of classes in 
\begin{equation*}\label{eq:corr}
    \left\{\textrm{$p-$projectively faithful representations of $E$ on $\mathbb{C}^{n+1}$}\right\}/\left\{\rho \sim \rho' \textrm{ iff } \exists \epsilon\in\textrm{Irr}_{\mathbb{C}}^1(E)\textrm{ s.t. } \chi_{\rho} = \epsilon \chi_{\rho'}\right\}.
\end{equation*}
We ensure that $\text{PFR}(E, G, \mathbb{C}^{n+1})$ is finite since the number of equivalence classes of linear representations of $E$ on $\mathbb{C}^{n+1}$ is actually finite (see \cite[Corollary (2.5)]{isa76}). Note that using character theory one can efficiently compute a set of characters afforded by the representatives of classes in $\text{PFR}(E, G, \mathbb{C}^{n+1})$. However, given this set of characters, the computation of the actual representations (given in matrix form) can be much more challenging (see \cite{repalg} for an algorithm and complexity discussion).

\subsection{Application to the case of K3 surfaces}\label{sec:3}
\hspace{\parindent} In this subsection, we apply the previous theory to compute projective models of K3 surfaces given as complete intersections of hypersurfaces of the same degree.\\

Let $(S, G, G_s)$ be the triple given in \Cref{tab:my_labelbis}. From \cite[77b]{bh20}, we know that the canonical $G$-invariant polarization $L$ of the pair $(S, G)$ has degree $c_1(L)^2 = 8$. According to a remark in \cite[Page 615]{sd74}, either the polarization $L$ is \emph{hyperelliptic} and the linear system $|L|$ defines a degree 2 map onto a surface of degree 4 in $\mathbb{P}^5_{\mathbb{C}}$, or it is not hyperelliptic and $S$ is birational to a surface of degree 8 in $\mathbb{P}^5_{\mathbb{C}}$. By \cite[Theorem 5.2.]{sd74}, there are numerical conditions to decide whether such a polarization is hyperelliptic, and such conditions can be checked in practice using an algorithm of Shimada \cite[Algorithm 2.2]{shimada}.
In a recent database of Brandhorst and Hofmann \cite[77.2.1.3]{st}, one can recover workable lattice data about such a triple $(S,G, G_s)$ and show that the $G-$invariant polarization does not satisfy any of the conditions of \cite[Theorem 5.2.]{sd74}. Therefore $L$ is not hyperelliptic and the linear system $|L|$ defines a birational map $\varphi_{\mid L\mid}$ from $S$ onto a surface of degree 8 in $\mathbb{P}^5_{\mathbb{C}}$. We moreover know that $L$ lies in $2\textnormal{NS}(S)^\vee$: according to \cite[Theorem 7.2.]{sd74}, this implies that $\varphi_{\mid L\mid}(S)\subseteq \mathbb{P}^5_{\mathbb{C}}$ is given as a complete intersection of 3 quadrics. Finally, the orthogonal complement of $L$ in $\textnormal{NS}(S)$ contains no vector of square $-2$, so the previous complete intersection is actually smooth. The surface $S$ can therefore be described as a smooth complete intersection of 3 quadrics in $\mathbb{P}^5_\mathbb{C}$ (see also \cite[Theorem 1.3]{deg22}).

Let $G$ be the group with Id $[384, 5602]$ (in the Small Group Library \cite{sgl}). Using GAP \cite{GAP}, one can show that this group has Schur multiplier $M(G)$ isomorphic to $C_2^3$, and therefore any Schur cover of $G$ has order 3072. We compute such a Schur cover $p\colon E\twoheadrightarrow G$, using for instance the GAP method \textsc{EpimorphismSchurCover}: the following steps may differ depending on the choice of the Schur cover, yet the final result shall remain true.
The Schur cover $E$ chosen for these computations has 10 classes of $p-$projectively faithful representations on $F^6$, where $F := \mathbb{Q}(\zeta_{24})$ is the $24-$th cyclotomic field with 24 being the exponent of $E$. Here we work over $F$ instead of $\mathbb{C}$ for computational reasons: according to \cite[Corollary (9.15)]{isa76}, the field $F$ is a splitting field for $E$, so we are allowed to restrict to $F$ (the results remain true over $\mathbb{C}$). In what follows, we denote $z := \zeta_{24}$ and $i := z^6$.

Let $M$ be the $FE-$module $(F^6, \rho)$, where $\rho$ is given by the $\sigma_i$'s in \cref{theo:mainth}. The representation $\rho$ is $p-$projectively faithful, and $N := (R_2, \rho_2)$ is a $21-$dimensional $FE-$module (where $(R_{\bullet}, \rho_{\bullet})$ is defined as in \cref{subsec:redu}). Let $\chi$ be the $F-$character of $E$ afforded by $N$. One has that $C_{\chi}(3) = \{\mu\}$ where $\mu\in\textnormal{Irr}^3_{F}(E)$ and $\langle\chi,\mu\rangle = 2$. Therefore, we have that $\text{Gr}(3, N)$ is irreducible of dimension 1, equal to $\text{Gr}(\mu, N)$. Let $W$ be the isotypical component of $N$ affording $2\mu$. The $FE-$module $W$ consists of the sum of two equivalent simple modules, affording $\mu$, with respective $F-$bases
\[ \left\{w_1 := \left(\begin{array}{c}ix_0x_1 + x_0x_2+x_1x_3+ix_2x_3\\ix_0x_1-x_0x_2-x_1x_3+ix_2x_3\\-x_0x_3-x_1x_2\end{array}\right), w_2 := \left(\begin{array}{c}x_5^2\\x_4^2\\-x_4x_5\end{array}\right)\right\}\]
where $(x_0,\ldots,x_5)$ is a basis for the dual space of $F^6$. Note that these bases are chosen in such a way that the actions of $E$ on each of them have the same matrix representations. We know that any $3-$dimensional submodule of $N$ is then generated by a linear combination of $w_1$ and $w_2$ (\cref{theo:ge}). However, it is easy to see that the ideals respectively generated by $w_1$ and $w_2$ do not define smooth varieties. Now let $\lambda\in \mathbb{C}^{\times}$. The ideal generated by $w_1 + \lambda w_2$ defines a variety $S_{\lambda}$ which is by construction a K3 surface. Moreover, for distinct non-zero $\lambda_1\neq \lambda_2$, the projective change of coordinates
\[[x_0:x_1:x_2:x_3:x_4:x_5]\mapsto \left[x_0:x_1:x_2:x_3:\sqrt{\frac{\lambda_2}{\lambda_1}}x_4:\sqrt{\frac{\lambda_2}{\lambda_1}}x_5\right]\]
commutes with the action of $G$ on $\mathbb{P}^5_\mathbb{C}$ and it maps $S_{\lambda_2}$ to $S_{\lambda_1}$, which are therefore $G-$equivariantly isomorphic K3 surfaces. In this case, we say that $\{S_{\lambda}\}_{\lambda \in \mathbb{C}^{\times}}$  is a \emph{$1-$dimensional $G-$isotrivial family}. As expected, up to equivariance, this K3 surface is unique and given by $S := S_1$ in \cref{theo:mainth}. Finally, to ensure that $S$ corresponds to the case 77b in \cite{bh20}, we need that the subgroup $G_s$ of automorphisms of $G$ acting symplectically on $S$ is isomorphic to the group $T_{192}$. We use the following lemma, which is a direct consequence of the mentioned result of Mukai:

\begin{lem}[\cite{muk88}, Lemma (2.1)]\label{lem:muk}
Let $X\subseteq \mathbb{P}^n_{\mathbb{C}}$ be a smooth complete intersection of $t$ hypersurfaces of degree $d$ such that $td = n+1$. Suppose that $X$ is preserved under a faithful linear action of a finite group $G$ on $\mathbb{P}^n_{\mathbb{C}}$ and let $p\colon E\twoheadrightarrow G$ be a Schur cover. Let $\rho\colon E\to\textnormal{GL}_{n+1}(\mathbb{C})$ be $p-$projectively faithful, such that there exists a $t-$dimensional $\mathbb{C}E-$submodule $M$ of $(R_d, \rho_d)$ generating the ideal defining $X$. Then, the normal subgroup $G_s$ of $G$ consisting of elements whose induced action on $H^{2,0}(X)$ is trivial is given by
\[G_s = p\left(\left\{e\in E\;\mid\; \textnormal{det}(\chi_{\rho})(e) = \textnormal{det}(\chi_M)(e)\right\}\right).\]
\end{lem}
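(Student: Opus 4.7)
The plan is to make the action of $G$ on the one-dimensional space $H^{2,0}(X)$ explicit via a Poincaré residue, and then read off the scaling factor in terms of the determinants of $\chi_{\rho}$ and $\chi_{M}$.

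First, since $X\subseteq\mathbb{P}^n_{\mathbb{C}}$ is a smooth complete intersection of $t$ hypersurfaces of degree $d$ with $td=n+1$, the adjunction formula yields $K_X\cong\mathcal{O}_X$, so $H^{2,0}(X)\cong\mathbb{C}$. The linear $G$-action on $X$ therefore induces an action on $H^{2,0}(X)$ through some character $\psi\colon G\to\mathbb{C}^{\times}$, and by definition $G_s=\ker\psi$. It thus suffices to show that for every $e\in E$,
\[
\psi(p(e)) = \det(\chi_{\rho})(e)\,\det(\chi_{M})(e)^{-1},
\]
and that the right-hand side depends only on $p(e)$.

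Next I would realize a generator $\sigma$ of $H^{2,0}(X)$ concretely. Choose a basis $(f_1,\ldots,f_t)$ of $M$ generating $I(X)$. Under the adjunction isomorphism $K_X\cong K_{\mathbb{P}^n}|_X\otimes\det N_{X/\mathbb{P}^n}$, the Poincaré residue formula gives
\[
\sigma=\textnormal{Res}\!\left(\frac{\Omega}{f_1\cdots f_t}\right),\qquad \Omega=\sum_{i=0}^{n}(-1)^i x_i\,dx_0\wedge\cdots\wedge\widehat{dx_i}\wedge\cdots\wedge dx_n,
\]
where $\Omega$ is the canonical generator of $H^0(\mathbb{P}^n,K_{\mathbb{P}^n}(n+1))$ and $f_1\wedge\cdots\wedge f_t\in\bigwedge^{t}M$ trivialises $\det N_{X/\mathbb{P}^n}^{\vee}=\mathcal{O}_X(-n-1)$.

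The core computation is then determining how $e\in E$ scales numerator and denominator. A direct calculation with $\rho(e)\in\textnormal{GL}(\mathbb{C}^{n+1})$ shows $\rho(e)^{\ast}\Omega=\det\rho(e)\cdot\Omega=\det(\chi_{\rho})(e)\cdot\Omega$, which is the classical semi-invariance of the Euler form under $\textnormal{GL}_{n+1}$. On the other hand, $\rho_d(e)$ restricts to $M$ with character $\chi_{M}$, so it scales $f_1\wedge\cdots\wedge f_t\in\bigwedge^{t}M$ by $\det(\chi_{M})(e)$. Dividing, $\sigma$ is scaled by $\det(\chi_{\rho})(e)/\det(\chi_{M})(e)$, giving the formula for $\psi\circ p$.

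Finally, independence from the choice of lift of $g\in G$ follows from $td=n+1$: replacing $\rho$ by $\lambda\rho$ for a linear character $\lambda\colon E\to\mathbb{C}^{\times}$ multiplies $\det(\chi_{\rho})$ by $\lambda^{n+1}$ and $\det(\chi_{M})$ by $\lambda^{td}$, so the ratio is unchanged. Hence the condition $\det(\chi_{\rho})(e)=\det(\chi_{M})(e)$ is constant on each fibre of $p$, and one concludes $G_s=\ker\psi=p(\{e\in E\mid\det(\chi_{\rho})(e)=\det(\chi_{M})(e)\})$. The most delicate step is the Poincaré residue identification together with the careful tracking of conventions for the $E$-action on $\Omega$ and on $\bigwedge^{t}M$; once those are in place, the character-theoretic calculation is essentially mechanical.
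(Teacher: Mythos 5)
The paper does not actually prove this statement: it is imported verbatim as Lemma (2.1) of Mukai's paper, so there is no in-text argument to compare yours with. Your strategy --- trivial canonical bundle by adjunction, a Poincar\'e/Grothendieck residue generating $H^{2,0}(X)$, and bookkeeping of the two determinant characters --- is exactly the standard proof of Mukai's lemma, so the overall route is the right one.

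However, there is a variance error in the central computation, and your own consistency check is where it surfaces. Write $\sigma$ as the residue of $\Omega/(f_1\cdots f_t)$ and compute the pullback $\rho(e)^{\ast}\sigma$. The numerator indeed contributes $\det(\chi_{\rho})(e)$, but the denominator is transformed by \emph{precomposition} with $\rho(e)$, i.e.\ $f_i\mapsto f_i\circ\rho(e)$, and with the paper's convention $(e\cdot P)(x)=P(\rho(e)^{-1}x)$ this is $\rho_d(e)^{-1}f_i$, not $\rho_d(e)f_i$. Hence $f_1\wedge\cdots\wedge f_t$ is rescaled by $\det(\chi_M)(e)^{-1}$, and the consistent answer is $\rho(e)^{\ast}\sigma=\det(\chi_{\rho})(e)\,\det(\chi_M)(e)\,\sigma$, a product rather than your quotient, so the symplecticity condition reads $\det(\chi_M)(e)=\det(\chi_{\rho})(e)^{-1}$. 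Your final paragraph shows the quotient cannot be correct under this convention: replacing $\rho$ by $\lambda\rho$ sends $\rho_d$ to $\lambda^{-d}\rho_d$, so $\det(\chi_M)$ is multiplied by $\lambda^{-td}$, not $\lambda^{td}$; your ratio then changes by $\lambda^{2(n+1)}$, and restricted to $\ker p$ it equals $\beta^{2(n+1)}$, so it need not even descend to a character of $G$, whereas the product is invariant and trivial on $\ker p$ as it must be. To complete the proof you need to fix one convention for the $E$-action on $R_d$ and transform numerator and denominator with the same variance; note that Mukai's scalars $a_i$, defined by $F_i\circ\rho(e)=a_iF_i$, are the \emph{inverses} of the eigenvalues of $\rho_d(e)$ in the paper's normalisation, which is precisely the discrepancy between your formula, the corrected product formula, and the equality $\det(\chi_{\rho})(e)=\det(\chi_M)(e)$ as printed in the lemma.
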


\cref{lem:muk} offers a practical, and computationally feasible, way to compute $G_s$. Here, we can apply it to the group generated by the $\sigma_i$'s on $S_{\lambda}$. One finds that $G_s\cong T_{192}$ with, in particular, $\sigma_i$ acting symplectically on $S_{\lambda}$ for $i=1,2,3,4$ and $\sigma_5$ being a non-symplectic involution. This concludes the proof of \cref{theo:mainth}. See \cref{ap} for a few more examples of K3 surfaces of degree 8 computed in a similar way.

\section{Some geometric comments}\label{sec:sec4}
\hspace{\parindent} In this last section, we make some geometric comments about the pair $(S, G)$ (\cref{sec:3}). In particular, starting from the K3 surface $S$, we compute projective models of a new K3 surface and of a \emph{hyperk\"ahler manifold}\footnote{hyperk\"ahler manifolds, also called \emph{irreducible holomorphic symplectic manifolds}, are higher dimensional analogues of K3 surfaces - see \cite{debarre} for an introduction.}. For each of them, we inspect to which extent the group $G$, acting on the K3 surface $S$, acts on the new varieties constructed.

\subsection{Symplectic quotient}
\hspace{\parindent} The following observations were made with the help of Benedetta Piroddi, who notably pointed out the work in \cite{nikinvo}.\\

As already mentioned, the family of K3 surfaces $\{S_{\lambda}\}_{\lambda\in \mathbb{C}^{\times}}$ obtained in \cref{sec:3} is not a family in the sense of moduli for polarized K3 surfaces. Indeed, every two elements of this family are isomorphic to each other, and such an isomorphism can be made equivariant with respect to the prescribed $G-$action. Therefore, in moduli, this isotrivial family is just a point.

\begin{rem}
    This can already be seen by the rank of the Picard lattice: for $\lambda\neq 0$, the K3 surface $S_{\lambda}$ has Picard rank 20.
\end{rem}

Therefore, we do not have any degeneration of this family inside the associated moduli space. What would then happen at the limit points of this family ?\\
In the case where $\lambda = 0$, we obtain the union of 8 copies $P_1, \ldots, P_8$ of $\mathbb{P}^2$, pairwise meeting at the same rational line $l := V(x_0, x_1, x_2, x_3)\subseteq \mathbb{P}^5$. On the other side, when $\lambda = \infty$, we obtain a non-reduced variety $Z = V(x_4^2, x_5^2, x_4x_5)$ with reduced structure $Z_{red}\simeq \mathbb{P}^3$. Now, both of these extremal cases are pointwise fixed under the involution $\sigma := \sigma_3^2(\sigma_1\sigma_3)^2$ (see \cref{theo:mainth}). Therefore, the 8 points $p_1, \ldots, p_8\in\mathbb{P}^5$ obtained by intersecting each of the $P_i$'s with $Z_{red}$ are fixed under $\sigma$ too. Each of these points lies on $S_{\lambda}$ for all $\lambda\in\mathbb{C}^{\times}$. Since $\sigma$ acts symplectically on any of the $S_{\lambda}$'s, it only fixes points and for all $\lambda\in\mathbb{C}^\times$, the $\sigma-$fixed points of $S_{\lambda}$ are exactly $p_1, \ldots, p_8$.

\begin{rem}
    Note that the choice of $\sigma$ here is not arbitrary: the class $[\sigma]\in\text{PGL}_6(\mathbb{C})$ generates the unique order 2 normal subgroup of $G_s$ (defined as in \cref{lem:muk}).
\end{rem}

Via this observation, and the shape of the equations defining $S_{\lambda}$, one may notice that we fit in the case $\mathcal{M}_{\tilde{8}}$ of \cite[\S 3.7]{nikinvo}. According to the authors, each of the quotient varieties $S_{\lambda}\slash\sigma$ maps, by projection from the $\sigma-$invariant line $l$ onto $Z_{red}$, to the quartic surface $Y \subseteq Z_{red}$ defined by the equation
\[ (z_0z_1+z_2z_3)^2 + (z_0z_2 + z_1z_3)^2 +(z_0z_3+z_1z_2)^2 = 0.\]

This is a nodal surface with 8 singularities, which are respectively the images of the 8 $\sigma-$fixed points $p_i$ under the projection. The resolution of these singularities gives rise to a degree 4 polarized K3 surface $\tilde{Y}\to Y$. Since the subgroup generated by $\sigma$ is normal both in $G_s$ and $G$, the K3 surface $\tilde{Y}$ carries an action of the finite group $G/\sigma \cong C_2^4\rtimes D_6$ with normal symplectic sub-action given by the subgroup $C_2^2\rtimes S_4$.

\begin{rem}
    The finite group $H := G/\sigma$ of order 192 is abstractly isomorphic to the group $H_{192}$ in Mukai's list of maximal symplectic actions on K3 surfaces \cite{muk88}. However, its normal subgroup $H_s$ consisting of symplectic automorphisms has order 96: it is not maximal (in the sense of Mukai), and it corresponds to the group $\#65$ in Xiao's list \cite[Table 2]{xia96}. 
\end{rem}

\subsection{A symmetric hyperk\"ahler fourfold}
\hspace{\parindent} This second part of comments follows from suggestions of Enrico Fatighenti during a poster presentation of this project. Starting from a construction of quadric bundles \cite{beauville}, we use an already known construction to obtain a hyperk\"ahler fourfold from our K3 surface $S$. We then show that the induced action of $G$ on this new variety coincide with its natural induced action as described in \cite{boissiere1}\\

Let $V_6$ be a $6-$dimensional $\mathbb{C}-$vector space such that $S\subseteq \mathbb{P}(V_6)\simeq \mathbb{P}^5$ and let $V_3$ be a $3-$dimensional complex vector space. We see the space $Q_S$ of global quadric section on $S$, generated by
\begin{align*} &q_1 := i x_0x_1+ x_0x_2+ x_1x_3+i x_2x_3+ x_5^2,\\
&q_2 := i x_0x_1- x_0x_2- x_1x_3+i x_2x_3+ x_4^2, \textnormal{ and}\\
&q_3 := - x_0x_3-x_1x_2- x_4x_5,
\end{align*}
as the image of an injective linear map $q\colon V_3\hookrightarrow \textnormal{Sym}^2V_6^{\vee}$. The map $q$ induces an isomorphism $\mathbb{P}(Q_S)\simeq \mathbb{P}(V_3)$ which sends any class of non-zero quadrics on $S$ to the class of coordinates of one representative in the basis $\{q_1, q_2, q_3\}$. We see that through this description, we have a quadric bundle (see \cite[D\'efinition 1.1]{beauville}) $f\colon X \to \mathbb{P}(V_3)$ whose fiber $X_v$ over $[v]\in \mathbb{P}(V_3)$ has projective model $V([q(v)])\subseteq \mathbb{P}(V_6)$. According to \cite[Proposition 1.2]{beauville}, $f$ is a flat morphism whose general fiber is a smooth quadric fourfold. 

\begin{rem}
    The bundle $f$ has singular fibers over a curve $C\subseteq \mathbb{P}(V_3)$ defined by the zero locus of the discriminant form
\[ \Delta([v]) = \textnormal{disc}([q(v)])\]
where we see $q(v)$ as a complex quadratic form on $V_6$. This curve $C$ is of degree 6 with at most nodal singularities (\cite[Proposition 1.2]{beauville}). In our case, for a system of coordinates $\{y_1, y_2, y_3\}$ on $\mathbb{P}(V_3)$, one can check that $C$ is defined by
\[4y_1^5y_2-4y_1^4y_3^2+8y_1^3y_2^3+4y_1y_2^5-7y_1y_2y_3^4-4y_2^4y_3^2-y_3^6 = 0\]
and has 14 nodal points. The double cover of $\mathbb{P}(V_3)$ branched over $C$ is a nodal K3 surface, whose resolution $\tilde{S}$ is therefore a polarized K3 surface of genus 2 and Picard rank at least 15.
\end{rem} 

The map $q\colon V_3\hookrightarrow \textnormal{Sym}^2V_6^{\vee}$ also corresponds to a global section
\[s:= y_1q_1+y_2q_2+y_3q_3\in V_3^{\vee}\otimes \text{Sym}^2V_6^{\vee} \simeq H^0\left(\mathbb{P}(V_3)\times \mathbb{P}(V_6), \mathcal{O}(1,2)\right).\]
In this setting, one can view the quadric bundle $f$ previously defined as the projection from the Fano sixfold $Y$ defined by $s$,
\begin{equation}\label{eq:fano}
    Y := V(y_1q_1+y_2q_2+y_3q_3)\subseteq \mathbb{P}(V_3)\times \mathbb{P}(V_6),
\end{equation} 
onto the first factor. The construction in \cref{eq:fano} is known to the experts as a systematic way of producing examples of Fano varieties of K3 type given a general K3 surface of degree 8 (see for instance \cite[\S 4.1]{enrico1}, and \cite[\S 2.2]{enrico1} for a definition of \emph{K3 type}).

\begin{rem}[{{\cite{enrico}}}]
The fact that $Y$ in \cref{eq:fano} is of K3 type can been proved independently for the Hodge-theoretical sense and for the categorical sense (\cite[Propositions 48 \& 49]{manivel}).
\end{rem}

Now, the space $V_3^{\vee}\otimes \text{Sym}^2V_6^{\vee}$ is also isomorphic to the set of global sections of the homogeneous bundle $\mathcal{U}_{\text{Gr}(2, V_3)}^{\vee}\boxtimes \text{Sym}^2\mathcal{U}_{\text{Gr}(2, V_6)}^{\vee}$ over the product variety $\text{Gr}(2, V_3)\times \text{Gr}(2, V_6)$ (where $\mathcal{U}$ denotes the tautological bundle). To make a clear distinction, we denote $\tilde{s}\in V_3^{\vee}\otimes \text{Sym}^2V_6^{\vee}$ to be the same as $s$ but seen as a global section of $\mathcal{U}_{\text{Gr}(2, V_3)}^{\vee}\boxtimes \text{Sym}^2\mathcal{U}_{\text{Gr}(2, V_6)}^{\vee}$. The upshot here is the following: according to \cite[Proposition 3.1.3]{benedetti}, the zero locus $\tilde{Y}$ of $\tilde{s}$ inside $\text{Gr}(2, V_3)\times \text{Gr}(2, V_6)$ is actually isomorphic to the Hilbert scheme $S^{[2]}$ of length 2 subschemes of the initial K3 surface $S$. The variety $\tilde{Y}$ is therefore a hyperk\"ahler fourfold of $\textnormal{K3}^{[2]}-$type (see \cite{debarre} for definitions).

\begin{propo}
    The finite group $G$ of automorphisms of $S$ acts faithfully on $\tilde{Y}$ with symplectic sub-action also given by $G_s$.
\end{propo}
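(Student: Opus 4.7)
The plan is to equip $\tilde{Y}$ with a natural $G$-action inherited from the ambient variety $\textnormal{Gr}(2, V_3) \times \textnormal{Gr}(2, V_6)$, then identify this action with the canonical Hilbert scheme action on $S^{[2]}$ induced from $S$ by functoriality, and finally invoke the classical description of natural automorphism groups of Hilbert schemes of K3 surfaces to read off both faithfulness and the symplectic sub-action.

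First I would set up the $G$-action on the ambient product. The space $V_6$ carries the $E$-action $\rho$ from \cref{theo:mainth}, while $V_3$ inherits an $E$-action by transporting via $q\colon V_3\xrightarrow{\sim} Q_S$: since $Q_S \subseteq \textnormal{Sym}^2 V_6^{\vee}$ is the $\mathbb{C}E$-submodule affording the character $\mu$ (see \cref{sec:3}), $q$ becomes a morphism of $\mathbb{C}E$-modules. The section $\tilde{s}\in V_3^{\vee}\otimes \textnormal{Sym}^2 V_6^{\vee}$ corresponds to $q$, so it is $E$-invariant; its $p$-reduction gives a linear $G$-action on $\textnormal{Gr}(2, V_3) \times \textnormal{Gr}(2, V_6)$ preserving the zero locus $\tilde{Y}$.

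Next I would verify that the isomorphism $\tilde{Y} \simeq S^{[2]}$ of \cite[Proposition 3.1.3]{benedetti} is $G$-equivariant. Explicitly, a length-$2$ subscheme $Z\subseteq S$ spans a $2$-plane $\ell_Z \subseteq V_6$ (the projective line through $Z$ in $\mathbb{P}(V_6)$) and corresponds to the $2$-plane $\Pi_Z \subseteq V_3 \simeq Q_S$ of quadrics of $Q_S$ vanishing on $\ell_Z$. Both assignments are manifestly $G$-equivariant: $g\cdot \ell_Z = \ell_{g\cdot Z}$ because $g$ acts linearly on $V_6$, and $g\cdot \Pi_Z = \Pi_{g\cdot Z}$ because $G$ acts on quadrics via precomposition with $\rho(e)^{-1}$ as in \cref{subsec:redu}. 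Consequently, the $G$-action on $\tilde{Y}$ pulled back from the ambient product agrees with the natural Hilbert scheme action induced from $S$ in the sense of \cite{boissiere1}.

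Once this identification is secured, faithfulness is automatic: if $g\in G$ acts trivially on $S^{[2]}$, then it fixes every reduced length-$2$ subscheme, hence every point of $S$, so $g=1_G$ since $G$ already acts faithfully on $S$. The identification of the symplectic sub-action with $G_s$ then follows directly from the main result of \cite{boissiere1}, which states that a natural automorphism of $S^{[2]}$ induced from $g\in \textnormal{Aut}(S)$ is symplectic with respect to the Beauville--Bogomolov form on $S^{[2]}$ if and only if $g$ is symplectic on $S$. The only genuinely technical step I anticipate is the equivariance check in the second paragraph: one must carefully trace the induced $G$-structure on $V_3$ through the incidence description of \cite{benedetti}; the rest is formal.
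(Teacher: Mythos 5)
Your proposal follows essentially the same route as the paper: endow the ambient product of Grassmannians with the $G$-action coming from $\mathbb{P}(V_6)$ together with the $\mathbb{C}E$-module structure on $Q_S\cong V_3$, check that Benedetti's isomorphism $\tilde{Y}\cong S^{[2]}$ intertwines this action with the natural one $g\mapsto g^{[2]}$, and conclude via the Boissi\`ere--Sarti result that the symplectic sub-action is exactly $G_s$. The only cosmetic differences are that you argue faithfulness directly on $S^{[2]}$ (via reduced length-$2$ subschemes) where the paper deduces it from faithfulness of the action on $\mathbb{P}(V_3)\times\mathbb{P}(V_6)$, and that the symplecticity statement you invoke is \cite[Theorem 1]{boissiere} rather than \cite{boissiere1}, the latter being used only for the definition of natural automorphisms.
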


\begin{proof}
     The surface $S$ being preserved under the action of $G$ on $\mathbb{P}(V_6)$, we have an induced action of $G$ on $\mathbb{P}(Q_S)$ (given as in the beginning of \cref{subsec:redu}). Since the action of $G$ on $\mathbb{P}(V_6)$ is faithful, we have that
    \[g\cdot([v], [x]) = (q^{-1}(g\cdot [q(v)]), g\cdot [x]),\;\; \forall g\in G,\;\; \forall ([v], [x])\in \mathbb{P}(V_3)\times \mathbb{P}(V_6)\]
    is a well-defined faithful action. This action induces an action of $G$ on $\mathbb{P}(\bigwedge^2V_3)\times \mathbb{P}(\bigwedge^2V_6)$ which restricts to a faithful action of $G$ on the product variety $\text{Gr}(2, V_3)\times \text{Gr}(2, V_6)$ (see \cref{eq:wedge}). Following the proof of \cite[Proposition 3.1.3]{benedetti}, if $(A, B)\in V(\tilde{s})$, then the composite map
    \[ \Phi_{B}\colon V_3\overset{q}{\to} \textnormal{Sym}^2V_6^{\vee}\to \textnormal{Sym}^2B^{\vee}\]
    has kernel equal to $A$ and the zero locus of its image defines a length 2 subscheme of $S$: this gives rise to the isomorphism $\varphi$ between $V(\tilde{s})$ and $S^{[2]}$. We aim to prove that for all $g\in G$, the action of $g$ on $\text{Gr}(2, V_3)\times \text{Gr}(2, V_6)$ preserves $V(\tilde{s})$ and its induced action via $\varphi$ on $S^{[2]}$ coincides with the natural induced action $g^{[2]}$ of $g$ on $S^{[2]}$ (see \cite[D\'efinition 1]{boissiere1} for a definition).\\
    
    First of all if $(A, B)\in V(\tilde{s})$, by definition of $\tilde{s}$ we have that for all $a\in A$ and for all $b\in B$, $q(a)(b) = 0$.
    Therefore for all $g\in G$, $a\in A$ and $b\in B$, since $G$ acts faithfully on $\mathbb{P}(V_6)$,
    \[q(g\cdot a)(g\cdot b) = (g\cdot q(a))(g\cdot b) = q(a)(g^{-1}\cdot (g\cdot(b))) = q(a)(b) = 0.\]
    Hence $(g\cdot A, g\cdot B)\in V(\tilde{s})$ and $V(\tilde{s})$ is preserved by the faithful action of $G$ on $\text{Gr}(2, V_3)\times \text{Gr}(2, V_6)$. Now, one can show that if $g\in G$, $B\in \text{Gr}(2, V_6)$ and $v\in V_3$, then as quadratic forms on $g\cdot B$,
    \[g\cdot \Phi_{B}(v) = \Phi_{g\cdot B}(g\cdot v).\]
    So, if $S \supseteq Z = \varphi((A, B))$ is a length 2 subscheme, we have that $Z = V(\textnormal{Im}(\Phi_B))$ and the action of $G$ on $Z$ being induced by the action of $G$ on $\mathbb{P}(V_6)$, we have that for all $g\in G$ and for all $Q\in \textnormal{Im}(\Phi_{g\cdot B})$
    \[Q(g^{[2]}(Z)) = (g^{-1}\cdot Q)(Z) = 0\]
    since $g^{-1}\cdot Q\in \textnormal{Im}(\Phi_B)$. Therefore $g^{[2]}(Z) = \varphi((g\cdot A, g\cdot B))$ as expected. We conclude thanks to \cite[Theorem 1]{boissiere} which tells us that symplectic automorphisms on $S$ induce naturally symplectic automorphisms on $S^{[2]}$.
\end{proof}

We end this section with an interesting remark of \cite{enrico} about the manifold $\tilde{Y}$. Let us consider the first projection $\tilde{f}\colon \tilde{Y}\twoheadrightarrow \textnormal{Gr}(2, V_3)$. According to \cite[Remark 3.12]{debarre}, since $(S, L)$ is a general K3 surface of degree 8 (see notation of \cref{sec:3}), the pullback $\tilde{f}^{\ast}\mathcal{O}_{\text{Gr}(2, V_3)}$ is primitive and is isotropic for the BBF-form on $\tilde{Y}$. Therefore, \cite[Theorem 1.3]{dhmv} tells us that under these conditions $\tilde{f}$ defines a Lagrangian fibration. In our case, one can actually get an explicit description of the general fiber. Let $A\in \textnormal{Gr}(2, V_3)$ be general. The fiber over $A$ consists of all $2-$spaces $B$ in $V_6$ such that $\Phi_B$ vanishes exactly at $A$. Denoting $Q$ and $Q'$ two quadratic forms spanning $q(A)$ over $\mathbb{C}$, we have that $\tilde{f}^{-1}(A)$ consists of all the $2-$spaces in $V_6$ which are maximal isotropic with respect to both $Q$ and $Q'$ simultaneously. Therefore, seeing both $Q$ and $Q'$ as sections of the bundle $\textnormal{Sym}^2\mathcal{U}_{\textnormal{Gr}(2, V_6)}^{\vee}$ over $\textnormal{Gr}(2, V_6)$, we can identity $\tilde{f}^{-1}(A)$ with the intersection $\tilde{A} := V(Q)\cap V(Q')\subseteq \textnormal{Gr}(2, V_6)$.

\begin{rem}[{{\cite{enrico}}}]
    $\tilde{A}$ is often called a \emph{doubly orthogonal Grassmannian} and denoted $\textnormal{OGr}_2(2, V_6)$ (both $V(Q)$ and $V(Q')$ define copies of the \emph{orthogonal Grassmannian} $\textnormal{OGr}(2, V_6)\subseteq \textnormal{Gr}(2, V_6)$).
\end{rem}

This variety $\tilde{A}$ is actually an abelian surface: indeed, considering now $Q$ and $Q'$ as quadric sections on $\mathbb{P}(V_6)$, the variety $\tilde{A}$ can be seen as the variety of lines on the complete intersection $V(Q, Q')\subseteq \mathbb{P}(V_6)$. As shown in \cite[\S 3]{miles}, $\tilde{A}$ is thus isomorphic to the Jacobian variety of the curve obtained by taking the double cover of $\mathbb{P}(q(A))$ branched along the 6 classes of singular quadrics in $q(A)$.

\begin{rem}
 The fact that there are only 6 classes of singular quadrics in $q(A)$ follows from the fact that $A$ is chosen to be general and the discriminant form on $\text{Sym}^2V_6^{\vee}$ is homogeneous of degree 6. Therefore, the divisor in $\mathbb{P}(\text{Sym}^2V_6^{\vee})$ parametrising singular complex quadratic forms on $V_6$ cuts $\mathbb{P}(q(A))$ in exactly 6 points.
\end{rem}

\appendix 
\section{More polarized K3 surfaces of degree 8}\label{ap}
\hspace{\parindent} In \cref{tab:tabk3}, we provide equations for different triples $(S, G, G_s)$ consisting of a polarized K3 surface $S$ of degree 8, a finite group $G$ of automorphisms of $S$, with symplectic sub-action given by $G_s$. Using the numerical data available in the database of \cite{st}, together with the theory of \cite{sd74} and an algorithm from \cite{shimada}, we know that there exist such symmetric K3 surfaces which can be described as complete intersections of 3 quadrics in $\mathbb{P}^5_\mathbb{C}$. Note that now, the corresponding K3 surfaces with such symmetries are not necessarily unique, and there are also several deformation families with the same group actions and degree. Hence \cref{tab:tabk3} is not complete.

In \cref{tab:tabk3}, the groups $G$ and $G_s$ are identified by their ID's in the Small Group Library \cite{sgl}, the column $\#$ gives the entry of \cite[Table 2]{xia96} corresponding the groups $G_s$, and any $\zeta_n$ denotes a primitive $n$-th root of unity. Whenever it makes sense, we give parameters $\alpha_i$'s in the equations, with $1\leq i\leq D$, arising from the output of the algorithm explained in the paper (after removing parameters describing isotrivial families). Finally, we also tell whether the generic element in the family described by the associated equations is  smooth.

\renewcommand{\arraystretch}{1.3}
\begin{table}[t!]
    \resizebox{\textwidth}{!}{\small{\begin{tabular}{|c|c|c|c|c|c|}
        \hline
           $S$&$G$&$G_s$&$\#$&$D$&smooth\\
        \hline
           $\left\{\begin{array}{lll}
          x_0^2+x_1^2+x_2^2+x_3^2+x_4^2+\alpha_1x_5^2&=&0\\
          -\zeta_{10}^3x_0^2-\zeta_{10}x_1^2+\zeta_{10}^4x_2^2+\zeta_{10}^2x_3^2+x_4^2&=&0\\
          -\zeta_{10}x_0^2+\zeta_{10}^2x_1^2-\zeta_{10}^3x_2^2+\zeta_{10}^4x_3^2+x_4^2&=&0\end{array}\right.$&$[160, 235]$&$[16,14]$&21&1&yes\\
        \hline
        $\left\{\begin{array}{lll}
        x_0x_1+x_2x_3+x_4x_5&=&0\\
        (1-\zeta_4)x_0^2+(\zeta_4-1)x_1^2+\zeta_4x_2^2-\zeta_4x_3^2-x_4^2+x_5^2&=&0\\
        \zeta_4x_0^2-\zeta_4x_1^2+(1-\zeta_4)x_2^2+(\zeta_4-1)x_3^2-x_4^2+x_5^2&=&0
        \end{array}\right.$&$[96, 226]$&$[48, 48]$&51&0&yes\\
        \hline
     
        $\left\{\begin{array}{lll}
        x_0x_1+x_2^2+x_3^2+x_4^2+x_5^2&=&0\\
        \zeta_4x_0^2-\zeta_4x_1^2-\alpha_1(x_4^2-x_5^2)&=&0\\
        x_0^2+x_1^2-\alpha_1(x_2^2-x_3^2)&=&0
        \end{array}\right.$&$[128, 928]$&$[64, 138]$&56&1&yes\\
        
        \hline
        
        $\left\{\begin{array}{lll}
        x_0^2+\alpha_1x_1x_2-\zeta_6x_3^2+(\zeta_6-1)x_4^2+x_5^2&=&0\\
        x_1^2+\alpha_1x_0x_2+x_3^2+x_4^2+x_5^2&=&0\\
        x_2^2+\alpha_1x_0x_1+(\zeta_6-1)x_3^2-\zeta_6x_4^2+x_5^2&=&0
        \end{array}\right.$&$[144,189]$&$[72, 43]$&61&1&yes\\
       
        
        \hline
        
         $\left\{\begin{array}{lll}
          x_0^2+x_1^2+x_2^2-x_3^2-x_4^2-x_5^2&=&0\\
          \zeta_{3}x_0^2-(1+\zeta_{3})x_1^2+x_2^2-\alpha_1((1+\zeta_{3})x_3^2-\zeta_3x_4^2-x_5^2)&=&0\\
          \zeta_{3}x_3^2-(1+\zeta_{3})x_4^2+x_5^2-\alpha_1((1+\zeta_{3})x_0^2-\zeta_3x_1^2-x_2^2)&=&0\end{array}\right.$&$[192,1538]$&$[96,227]$&65&1&yes\\
        
        \hline
        
        $\left\{\begin{array}{lll}
        x_0^2+x_1^2+x_2^2+x_3^2+x_4^2+x_5^2&=&0\\
        (\zeta_6-1)x_0^2+(\zeta_6-1)x_1^2-\zeta_6x_2^2-\zeta_6x_3^2+x_4^2+x_5^2&=&0\\
        -\zeta_6x_0^2-\zeta_6x_1^2+(\zeta_6-1)x_2^2+(\zeta_6-1)x_3^2+x_4^2+x_5^2&=&0
        \end{array}\right.$&$[384, 18235]$&$[192, 1023]$&75&0&no\\
        
        \hline
        
        $\left\{\begin{array}{lll}
        -x_0^2+x_1^2-\alpha_1(x_4^2+x_5^2)&=&0\\
        -x_2^2+x_3^2-\alpha_1(x_0^2+x_1^2)&=&0\\
        -x_4^2+x_5^2-\alpha_1(x_2^2+x_3^2)&=&0
        \end{array}\right.$&$[384, 18235]$&$[192, 1023]$&75&1&yes\\
        
        \hline
        
        $\left\{\begin{array}{lll}
        x_0^2+\zeta_3^2x_3^2+\zeta_3x_4^2+x_5^2&=&0\\
        x_1^2+\zeta_3x_3^2+\zeta_3^2x_4^2+x_5^2&=&0\\
        x_2^2+x_3^2+x_4^2+x_5^2&=&0
        \end{array}\right.$&$[576, 8657]$&$[288, 1026]$&78&0&yes\\
        \hline
    \end{tabular}}}
    \caption{Equations of algebraic K3 surfaces of degree 8}
    \label{tab:tabk3}
\end{table}

\newcommand{\etalchar}[1]{$^{#1}$}

\end{document}